\definecolor{refkey}{gray}{0.4}
\definecolor{labelkey}{gray}{0.3}
\numberwithin{equation}{section}
\newcommand{\be}{\begin{equation}}
\newcommand{\ee}{\end{equation}}
\newcommand{\ba}{\begin{array}}
\newcommand{\ea}{\end{array}}
\newcommand{\bea}{\begin{eqnarray}}
\newcommand{\eea}{\end{eqnarray}}
\newcommand{\beaa}{\begin{eqnarray*}}
\newcommand{\eeaa}{\end{eqnarray*}}
\newcommand{\half}{\frac{1}{2}}
\newcommand{\br}{\mathbb{R}}
\newcommand{\LCal}{\mathcal{L}}
\newcommand{\ACal}{\mathcal{A}}
\newcommand{\BCal}{\mathcal{B}}
\newcommand{\CCal}{\mathcal{C}}
\newcommand{\PCal}{\mathcal{P}}
\newcommand{\Shrink}{\mathrm{Shrink}}
\newcommand{\sgn}{\mathrm{sgn}}
\newcommand{\Xvec}{\mathrm{vec}}
\newcommand{\Tr}{\mathbf{Tr}}
\newcommand{\etal}{{et al. }}
\newcommand{\st}{\mbox{ s.t. }}
\newcommand{\prox}{\mathrm{prox}}
\newtheorem{remark}[theorem]{Remark}
\begin{document}

\title{Alternating Direction Method of Multipliers for Sparse Principal Component Analysis}
\date{\today}
\author{Shiqian Ma\footnotemark[1]}
\renewcommand{\thefootnote}{\fnsymbol{footnote}}
\footnotetext[1]{Institute for Mathematics and Its Applications, 400 Lind Hall, 207 Church Street SE, University of Minnesota, Minneapolis, MN 55455, USA.
\quad Email: maxxa007@ima.umn.edu.}

\renewcommand{\thefootnote}{\arabic{footnote}}

\maketitle \centerline{November 28, 2011}

\begin{abstract} We consider a convex relaxation  of sparse principal component analysis proposed by d'Aspremont \etal
in \cite{daspremont-sparsePCA-direct-formulation-2007}. This convex relaxation is a nonsmooth semidefinite programming problem in which the $\ell_1$ norm
of the desired matrix is imposed in either the objective function or
the constraint to improve the sparsity of the resulting matrix. The sparse principal component is obtained by a rank-one decomposition of the resulting
sparse matrix. We propose an alternating direction method based on a variable-splitting technique and an augmented Lagrangian framework for solving this
nonsmooth semidefinite programming problem. In contrast to the first-order method proposed in \cite{daspremont-sparsePCA-direct-formulation-2007} that solves approximately the dual problem of the original semidefinite programming problem, our method deals with the primal problem directly and solves it exactly, which guarantees that the resulting matrix is a sparse matrix. Global convergence result is established for the proposed method. Numerical results on both synthetic problems
and the real applications from classification of text data and senate voting data are reported to demonstrate the efficacy of our method.
\end{abstract}

\begin{keywords} Sparse PCA, Semidefinite Programming, Alternating Direction Method, Augmented Lagrangian Method, Deflation, Projection onto the Simplex \end{keywords}

\begin{AMS} 62H25, 90C25, 90C22, 62H30, 65K05 \end{AMS}

\section{Introduction}\label{sec:intro} Principal component analysis (PCA) plays an important role in applications arising from data analysis,
dimension reduction and bioinformatics etc. PCA finds a few linear combinations of the original variables. These linear combinations, which are
called principal components (PC), are orthogonal to each other and explain most of the variance of the data. Specifically, for a given data matrix
$M\in\br^{p\times n}$ which consists of $n$ samples of the $p$ variables, PCA corresponds to a singular value decomposition (SVD) of $M$ or an
eigenvalue decomposition of the sample covariance matrix $\Sigma=MM^\top\in\br^{p\times p}$. Thus, for a given sample covariance matrix $\Sigma$,
PCA is usually formulated as an eigenvalue problem:
\be\label{prob:PCA} x^* := \arg\max \quad x^\top \Sigma x, \quad \st \quad \|x\|_2 \leq 1, \ee where $\|x\|_2$ is the Euclidean norm of vector $x$.
Problem \eqref{prob:PCA} gives the eigenvector that corresponds to the largest eigenvalue of $\Sigma$. However, the loading vector $x^*$ is not
expected to have many zero coefficients. This makes it hard to explain the PCs. For example, in the text classification problem, we are given a
binary data matrix $M\in\br^{p\times n}$ that records the occurrences of $p$ words in $n$ postings. That is, $M_{ij}=1$ if the $i$-th word appears
in the $j$-th posting and $M_{ij}=0$ if the $i$-th word does not appear in the $j$-th posting. The standard PCA cannot tell which words contribute most to the explained variance since the loadings are linear combinations of all the variables.
Thus, sparse PCs are needed because it is easier to analyze which variables contribute most to the explained variance.

Many techniques were proposed to extract sparse PCs from given sample covariance matrix $\Sigma$ or sample data matrix $M$. One natural thought is to
impose a cardinality constraint to \eqref{prob:PCA}, which leads to the following formulation for sparse PCA:
\be\label{prob:PCA-card-cons} x^* := \arg\max \quad x^\top \Sigma x, \quad \st \quad \|x\|_2 \leq 1, \quad \|x\|_0 \leq K, \ee
where $\|x\|_0$ (the $\ell_0$ norm of $x$) counts the number of nonzeros of $x$ and the integer $K$ controls the sparsity of the solution.
Since the cardinality constraint $\|x\|_0\leq K$ makes the problem numerically intractable, many different models were proposed in the literature to overcome
this difficulty.

In \cite{daspremont-sparsePCA-direct-formulation-2007}, d'Aspremont \etal proposed to approximately solve \eqref{prob:PCA-card-cons} by its convex relaxation,
which is a nonsmooth semidefinite programming (SDP) problem. This is the first work that attempts to approximately solve
\eqref{prob:PCA-card-cons} by a convex problem.
The SDP formulation is based on the lifting and projection technique, which is a standard technique in using SDP to approximate combinatorial problems
(see e.g., \cite{Alizadeh93interiorpoint,BV-ConvexBook2004,Todd2001}). Note that if we denote $X=xx^\top$,
then \eqref{prob:PCA-card-cons} can be rewritten as
\be\label{prob:PCA-dAspremont-sdp-con-L0} \max_{X\in\br^{p\times p}}\{\langle \Sigma, X\rangle, \st \Tr(X) = 1, \|X\|_0 \leq K^2, X\succeq 0, \rank(X) = 1\}, \ee
where $\Tr(X)$ denotes the trace of matrix $X$. The rank constraint is then dropped and the cardinality constraint is replaced by $\ell_1$ norm constraint,
and this leads to following convex problem, which is an SDP.
\be\label{prob:PCA-dAspremont-sdp-con-L1}\max_{X\in\br^{p\times p}}\{\langle \Sigma, X\rangle, \st \Tr(X) = 1, \|X\|_1 \leq K, X\succeq 0\},\ee where the $\ell_1$ norm of $X$ is defined as $\|X\|_1:=\sum_{ij}|X_{ij}|$ and using the convex constraint $\|X\|_1 \leq K$ to impose the sparsity of the solution is inspired by the recent emergence of compressed sensing (see e.g., \cite{Candes-Romberg-Tao-2006,Donoho-06}).
Note that $\|X\|_1\leq K$ is used in \eqref{prob:PCA-dAspremont-sdp-con-L1} instead of $\|X\|_1\leq K^2$. This is due to the fact that,
when $X=xx^\top$ and $\Tr(X) = 1$, we have $\|X\|_F=1$, and also that if $\|X\|_0\leq K^2$, then $\|X\|_1\leq K\|X\|_F$.
After the optimal solution $X^*$ to \eqref{prob:PCA-dAspremont-sdp-con-L1} is obtained, the vector $\hat{x}$ from the rank-one decomposition of $X^*$,
i.e., $X^*=\hat{x}\hat{x}^\top$ is used as an approximation of the solution of \eqref{prob:PCA-card-cons}.
This is the whole procedure of the lifting and projection technique.
Although some standard methods such as interior point methods can be used to solve the SDP \eqref{prob:PCA-dAspremont-sdp-con-L1}
(see e.g., \cite{Alizadeh93interiorpoint,BV-ConvexBook2004,Todd2001}), it is not wise to do so because \eqref{prob:PCA-dAspremont-sdp-con-L1}
is a nonsmooth problem, and transferring it to a standard SDP increases the size of the problem dramatically.

It is known that \eqref{prob:PCA-dAspremont-sdp-con-L1} is equivalent to the following problem with an appropriately chosen parameter $\rho > 0$:
\be\label{prob:PCA-dAspremont-sdp-pen-L1}\max_{X\in\br^{p\times p}} \{\langle \Sigma, X\rangle - \rho\|X\|_1 \st \Tr(X) = 1, X\succeq 0\}. \ee
Note that \eqref{prob:PCA-dAspremont-sdp-pen-L1} can be rewritten as
\be \label{prob:PCA-dAspremont-sdp-pen-L1-U} \max_{X\succeq 0, \Tr(X) = 1} \min_{\|U\|_\infty\leq \rho} \langle \Sigma+U,X\rangle, \ee
where $\|U\|_\infty$ denotes the largest component of $U$ in magnitude, i.e., $\|U\|_\infty =\max_{ij}|U_{ij}|$.
The dual problem of \eqref{prob:PCA-dAspremont-sdp-pen-L1} is given by interchanging the max and min in \eqref{prob:PCA-dAspremont-sdp-pen-L1-U},
i.e., \[\min_{\|U\|_\infty\leq \rho} \max_{X\succeq 0, \Tr(X) = 1}  \langle \Sigma+U,X\rangle,\] which can be further reduced to
\be \label{prob:PCA-dAspremont-sdp-pen-L1-dual} \min_{U\in\br^{p\times p}} \lambda_{\max}(\Sigma+U), \quad \st \quad \|U\|_\infty\leq \rho, \ee
where $\lambda_{\max}(Z)$ denotes the largest eigenvalue of matrix $Z$. d'Aspremont \etal \cite{daspremont-sparsePCA-direct-formulation-2007} proposed to solve the dual problem \eqref{prob:PCA-dAspremont-sdp-pen-L1-dual} using Nesterov's first-order algorithm (see e.g., \cite{Nesterov-1983,Nesterov-2005}), which is an accelerated projected gradient method. However, since the objective function of \eqref{prob:PCA-dAspremont-sdp-pen-L1-dual}
is nonsmooth, one needs to smooth it in order to apply Nesterov's algorithm. Thus, the authors of \cite{daspremont-sparsePCA-direct-formulation-2007}
actually solve an approximation of the dual problem \eqref{prob:PCA-dAspremont-sdp-pen-L1-dual}, which can be formulated as follows.
\be\label{prob:PCA-dAspremont-sdp-pen-L1-dual-NesterovSmooth} \min \quad f_\mu(U), \st \|U\|_\infty\leq\rho, \ee
where $\mu>0$ is the smoothing parameter, $f_\mu(U):=\max\{\langle \Sigma+U,X \rangle - \mu d(X), \st \Tr(X)=1,X\succeq 0\}$ and $d(X):=\Tr(X\log X)+\log(n)$.
It is shown in \cite{daspremont-sparsePCA-direct-formulation-2007} that an approximate solution $X^k$ to the primal problem \eqref{prob:PCA-dAspremont-sdp-pen-L1} can be obtained by $X^k=\nabla f_\mu(U^k)$, where $U^k$ is an approximate solution of \eqref{prob:PCA-dAspremont-sdp-pen-L1-dual-NesterovSmooth}. It is easy to see that $X^k$ is not guaranteed to be a sparse matrix.
Besides, although there is no duality gap between \eqref{prob:PCA-dAspremont-sdp-pen-L1} and \eqref{prob:PCA-dAspremont-sdp-pen-L1-dual},
the authors solve an approximation of \eqref{prob:PCA-dAspremont-sdp-pen-L1-dual}. It needs also to be noted that Nesterov's algorithm used
in \cite{daspremont-sparsePCA-direct-formulation-2007} cannot solve the constrained problem \eqref{prob:PCA-dAspremont-sdp-con-L1}.
Although \eqref{prob:PCA-dAspremont-sdp-con-L1} and \eqref{prob:PCA-dAspremont-sdp-pen-L1} are equivalent with appropriately chosen
parameters $K$ and $\rho$, in many applications, it is usually easier to choose an appropriate $K$ since we know how many nonzeros are
preferred in the sparse PCs.

Nonconvex reformulations of \eqref{prob:PCA-card-cons} include the followsings. Zou \etal \cite{Zou-spca-2006} considered a regression type formulation of \eqref{prob:PCA-card-cons} with Lasso and
elastic net regularizations. d'Aspremont \etal \cite{daspremont-sparsePCA-JMLR-2008} considered a penalty version of \eqref{prob:PCA-card-cons},
\be\label{prob:PCA-card-penalty} \phi(\rho) \equiv \max_{\|x\|_2\leq 1} x^\top \Sigma x - \rho \|x\|_0.\ee d'Aspremont \cite{daspremont-sparsePCA-JMLR-2008}
showed that \eqref{prob:PCA-card-penalty} is equivalent to the following problem that maximizes a convex function over spherical constraint:
\be\label{prob:PCA-card-penalty-equiv-max-convex} \phi(\rho) = \max_{\|x\|_2=1} \sum_{i=1}^p ((a_i^\top x)^2-\rho)_+, \ee
where $(\alpha)_+:=\max\{\alpha,0\}$, $\Sigma = A^\top A$ and $a_i$ is the $i$-th column of $A\in\br^{p\times p}$. Clearly,
\eqref{prob:PCA-card-penalty-equiv-max-convex} is a non-convex problem.  d'Aspremont \etal thus proposed in \cite{daspremont-sparsePCA-JMLR-2008}
to solve \eqref{prob:PCA-card-penalty-equiv-max-convex} by a greedy method. Journee \etal \cite{Journee-Nesterov-sparsePCA-JMLR-2010} considered
the same formulation \eqref{prob:PCA-card-penalty-equiv-max-convex} and proposed a gradient type method, which is actually a generalized power method,
to solve it.

Note that \eqref{prob:PCA-card-cons} only gives the largest sparse PC.
In many applications, several leading sparse PCs are needed in order to explain more variance. Multiple sparse PCs are usually found by solving a sequence of sparse PCA problems \eqref{prob:PCA-card-cons}, with $\Sigma$ constructed via the so-called {\it deflation} technique for each sparse PC.
Lu and Zhang \cite{Lu-Zhang-sparsePCA-MPA-2011}
proposed the following model to compute the leading $r$ sparse PCs of $\Sigma$ simultaneously:
\be\label{prob:PCA-zhaosong} \ba{ll}
\displaystyle\max_{V\in\br^{p\times r}} & \Tr(V^\top\Sigma V) - \rho \|V\|_1 \\ \st & |V_i^\top\Sigma V_j| \leq \Delta_{ij}, \forall i\neq j, \\ & V^\top V = I,
\ea\ee where each column of $V$ corresponds to a loading vector of the sample covariance matrix $\Sigma$ and $\Delta_{ij}\geq 0 (i\neq j)$ are
the parameters that control the correlation of the PCs.
Lu and Zhang \cite{Lu-Zhang-sparsePCA-MPA-2011} proposed an augmented Lagrangian method to solve \eqref{prob:PCA-zhaosong}.
Note that for these nonconvex formulations, algorithms proposed in the literature usually have only local convergence and global convergence is not guaranteed.

In this paper, we propose an alternating direction method based on a variable-splitting technique and an augmented Lagrangian framework
for solving directly the primal problems \eqref{prob:PCA-dAspremont-sdp-con-L1} and \eqref{prob:PCA-dAspremont-sdp-pen-L1}.
Our method solves two subproblems in each iteration. One subproblem has a closed-form solution that corresponds to projecting
a given matrix onto the simplex of the cone of semidefinite matrices. This projection requires an eigenvalue decomposition.
The other subproblem has a closed-form solution that corresponds to a vector shrinkage operation (for Problem \eqref{prob:PCA-dAspremont-sdp-pen-L1})
or a projection onto the $\ell_1$ ball (for Problem \eqref{prob:PCA-dAspremont-sdp-con-L1}). Thus, our method produces two iterative points at each iteration.
One iterative point is a semidefinite matrix with trace equal to one and the other one is a sparse matrix.
Eventually these two points will converge to the same point, and thus we get an optimal solution which is a sparse and semidefinite matrix.
Compared with the Nesterov's first-order method suggested in \cite{daspremont-sparsePCA-direct-formulation-2007} for
solving the approximated dual problem \eqref{prob:PCA-dAspremont-sdp-pen-L1-dual-NesterovSmooth}, our method can solve the nonsmooth primal problems
\eqref{prob:PCA-dAspremont-sdp-con-L1} and \eqref{prob:PCA-dAspremont-sdp-pen-L1} uniformly. Also, since we deal with the primal problems directly,
the $\ell_1$ norm in the constraint or the objective function guarantees that our solution is a sparse matrix, while Nesterov's method in
\cite{daspremont-sparsePCA-direct-formulation-2007} does not guarantee
this since it solves the approximated dual problem.

The rest of the paper is organized as follows. In Section \ref{sec:admm},
we introduce our alternating direction method of multipliers for solving the nonsmooth SDP problems \eqref{prob:PCA-dAspremont-sdp-con-L1}
and \eqref{prob:PCA-dAspremont-sdp-pen-L1}. The global convergence results of the alternating direction method of multipliers are given
in Section \ref{sec:convergence}. We discuss some practical issues including the deflation technique for computing multiple sparse PCs
in Section \ref{sec:deflation}. In Section \ref{sec:numerical}, we use our alternating direction method of multipliers to solve sparse
PCA problems arising from different applications such as classification of text data and senate voting records to demonstrate the efficacy of our method.
We make some conclusions in Section \ref{sec:conclusion}.

\section{Alternating Direction Method of Multipliers}\label{sec:admm}
We first introduce some notation. We use $\CCal$ to denote the simplex of the cone of the semidefinite matrices, i.e.,
$\CCal=\{X\in\br^{p\times p}\mid \Tr(X)=1,X\succeq 0\}$. We use $\BCal$ to denote the $\ell_1$-ball with radius $K$ in $\br^{p\times p}$,
i.e., $\BCal=\{X\in\br^{p\times p}\mid \|X\|_1\leq K\}$. $I_\ACal(X)$ denotes the indicator function of set $\ACal$, i.e.,
\be\label{def-indicator-function}I_\ACal(X) = \left\{\ba{ll} 0 & \mbox{if }X\in \ACal, \\ +\infty & \mbox{otherwise}.\ea\right.\ee
We know that $I_\CCal(X)$ and $I_\BCal(X)$ are both convex functions since $\CCal$ and $\BCal$ are both convex sets.
We then can reformulate \eqref{prob:PCA-dAspremont-sdp-con-L1} and \eqref{prob:PCA-dAspremont-sdp-pen-L1} uniformly as the following unconstrained problem:
\be \label{min-sum-indicator-general} \min \quad -\langle\Sigma,X \rangle + I_\CCal(X) + h(X),\ee where $h(X)=I_\BCal(X)$ for
\eqref{prob:PCA-dAspremont-sdp-con-L1} and $h(X)=\rho\|X\|_1$ for \eqref{prob:PCA-dAspremont-sdp-pen-L1}.
Note that $h(X)$ is convex in both cases. \eqref{min-sum-indicator-general} can be also viewed as the following inclusion problem:
\be \label{inclusion-operator} \mbox{Find } X, \st 0\in -\Sigma + \partial I_\CCal(X) + \partial h(X).\ee
Problem \eqref{inclusion-operator} finds zero of the sum of two monotone operators.
Methods based on operator-splitting techniques, such as Douglas-Rachford method and Peachman-Rachford method,
are usually used to solve Problem \eqref{inclusion-operator}
(see e.g., \cite{Douglas-Rachford-56,Peaceman-Rachford-55,Lions-Mercier-79,Eckstein-thesis-89,Eckstein-Bertsekas-1992,Combettes-Pesquet-DR-2007,Combettes-Wajs-05}).
From the convex optimization perspective, the alternating direction method of multipliers (ADMM) for solving \eqref{min-sum-indicator-general}
is a direct application of the Douglas-Rachford method. ADMM has been successfully used to solve structured convex optimization problems arising
from image processing, compressed sensing, machine learning, semidefinite programming etc.
(see e.g., \cite{Glowinski-LeTallec-89,Gabay-83,Wang-Yang-Yin-Zhang-2008,Yang-Zhang-2009,Goldstein-Osher-08,Yuan-2009,Scheinberg-Ma-Goldfarb-NIPS-2010,
Goldfarb-Ma-Ksplit,Goldfarb-Ma-Scheinberg-2010,
Malick-Povh-Rendl-Wiegele-2009,Wen-Goldfarb-Yin-2009,Boyd-etal-ADM-survey-2011}).
We now show how ADMM can be used to solve the sparse PCA problem \eqref{min-sum-indicator-general}.

ADMM is based on a variable-splitting technique and an augmented Lagrangian framework. By introducing a new variable $Y$,
\eqref{min-sum-indicator-general} can be rewritten as
\be \label{min-sum-indicator-general-xy} \ba{ll} \min & -\langle\Sigma,X \rangle + I_\CCal(X) + h(Y) \\ \st & X = Y. \ea\ee
Note that although the number of variables is increased, the two nonsmooth functions $I_\CCal(\cdot)$ and $h(\cdot)$ are now
separated since they are associated with different variables.
For this equality-constrained problem, augmented Lagrangian method is a standard approach to solve it.
A typical iteration of augmented Lagrangian method for solving \eqref{min-sum-indicator-general-xy} is given by:
\be \label{alg:AL-min-sum-indicator}
\left\{\ba{lll}(X^{k+1},Y^{k+1}) & := & \displaystyle\arg\min_{(X,Y)} \LCal_\mu(X,Y;\Lambda^k) \\
\Lambda^{k+1} & := & \Lambda^k - \frac{1}{\mu}(X^{k+1}-Y^{k+1}), \ea\right.\ee
where the augmented Lagrangian function $\LCal_\mu(X,Y;\Lambda)$ is defined as:
\be \label{augmented-lagrangian-function}
\LCal_\mu(X,Y;\Lambda) := -\langle \Sigma,X\rangle + I_\CCal(X) + h(Y) - \langle \Lambda, X-Y\rangle + \frac{1}{2\mu}\|X-Y\|_F^2, \ee
where $\mu>0$ is a penalty parameter and $\Lambda$ is the Lagrange multiplier associated with the linear constraint $X=Y$.
Note that it is usually hard to minimize the augmented Lagrangian function $\LCal_\mu(X,Y;\Lambda^k)$ with respect to $X$
and $Y$ simultaneously. In fact, it is as difficult as solving the original problem \eqref{min-sum-indicator-general-xy}.
However, if we minimize the augmented Lagrangian function with respect to $X$ and $Y$ alternatingly, we obtain two subproblems
in each iteration and both of them are relatively easy to solve. This results in the following alternating direction method of multipliers.
\be\label{alg:ADMM-min-sum-indicator}\left\{\ba{lll} \displaystyle X^{k+1} & := & \arg\min_{X}\LCal_\mu(X,Y^k;\Lambda^k) \\
                                        \displaystyle  Y^{k+1} & := & \arg\min_Y \LCal_\mu(X^{k+1},Y;\Lambda^k) \\
                                         \Lambda^{k+1} & := & \Lambda^k - (X^{k+1}-Y^{k+1})/\mu, \ea\right.\ee
It can be shown that the two subproblems in \eqref{alg:ADMM-min-sum-indicator} are both relatively easy to solve in the
sparse PCA problem. Before we do that, we characterize two nice properties of the indicator function \eqref{def-indicator-function}.

\begin{itemize}
\item {\bf Property 1.} The proximal mapping of the indicator function $I_\ACal(\cdot)$ is the Euclidean projection onto $\ACal$, i.e.,
\be \label{prox-equal-projection} \prox_{I_\ACal}(X) \equiv \PCal_{\ACal}(X),\ee where
\be \label{def-proximal-mapping} \prox_{I_\ACal}(X) := \arg\min_U \{I_\ACal(U)+\half\|U-X\|_F^2\},\ee
and
\be \label{def-projection} \PCal_{\ACal}(X) := \arg\min_U \{\half\|U-X\|_F^2, \st U\in\ACal\}.\ee
\item {\bf Property 2.} The optimality conditions for Problem \eqref{def-projection} are given by
\be \label{opt-cond-projection-subgradient} X-U^* \in \partial I_\ACal(U^*),\ee which is equivalent to
\be \label{opt-cond-projection-subgradient-equiv} \langle X-U^*,Z-U^* \rangle \leq 0, \forall Z\in\ACal, \ee
where $U^*$ is the optimal solution of \eqref{def-projection}.
\end{itemize}

Now, the first subproblem in \eqref{alg:ADMM-min-sum-indicator} can be reduced to:
\be\label{alg:ADAL-min-sum-indicator-X-ind} X^{k+1} := \arg\min \left\{ \mu I_\CCal(X) + \half \|X-(Y^k+\mu\Lambda^k+\mu\Sigma)\|_F^2\right\}, \ee
which can be further reduced to projection onto $\CCal$ using Property 1,
\be \label{alg:ADAL-min-sum-indicator-X-proj}
X^{k+1} = \PCal_\CCal(Y^k+\mu\Lambda^k+\mu\Sigma) := \arg\min \left\{ \half \|X-(Y^k+\mu\Lambda^k+\mu\Sigma)\|_F^2, \st X\in\CCal\right\}. \ee

When $h(Y)=I_\BCal(Y)$ as in Problem \eqref{prob:PCA-dAspremont-sdp-con-L1}, the second subproblem in \eqref{alg:ADMM-min-sum-indicator} can be reduced to:
\be \label{alg:ADAL-min-L1-sub-Y-ind} Y^{k+1} := \arg\min \left\{\mu I_\BCal(Y) + \half\|Y-(X^{k+1}-\mu\Lambda^k)\|_F^2 \right\},\ee
which can be further reduced to projection onto $\BCal$ using Property 1,
\be \label{alg:ADAL-min-L1-sub-Y-proj}
Y^{k+1} = \PCal_\BCal(X^{k+1}-\mu\Lambda^k) := \arg\min \left\{\half\|Y-(X^{k+1}-\mu\Lambda^k)\|_F^2, \st Y\in\BCal \right\}.\ee

When $h(Y)=\rho\|Y\|_1$ as in Problem \eqref{prob:PCA-dAspremont-sdp-pen-L1}, the second subproblem in \eqref{alg:ADMM-min-sum-indicator} can be reduced to:
\be \label{alg:ADAL-min-L1-sub-Y-penL1} Y^{k+1} := \arg\min \left\{\mu\rho\|Y\|_1 + \half\|Y-(X^{k+1}-\mu\Lambda^k)\|_F^2 \right\}.\ee
Problem \eqref{alg:ADAL-min-L1-sub-Y-penL1} has a closed-form solution that is given by
\be \label{alg:ADAL-min-L1-sub-Y-penL1-sol} Y^{k+1} = \Shrink(X^{k+1}-\mu\Lambda^k,\mu\rho), \ee where the shrinkage operator is defined as:
\be \label{shrinkage} (\Shrink(Z,\tau))_{ij} := \sgn(Z_{ij})\max\{|Z_{ij}|-\tau,0\}, \forall i,j. \ee

In the following, we will show that \eqref{alg:ADAL-min-sum-indicator-X-ind} and \eqref{alg:ADAL-min-L1-sub-Y-ind} are easy to solve,
i.e., the two projections \eqref{alg:ADAL-min-sum-indicator-X-proj} and \eqref{alg:ADAL-min-L1-sub-Y-proj} can be done efficiently.
First, since the problem of projection onto $\CCal$
\be \label{projection-simlex-mat} \PCal_\CCal(X) = \arg\min \{ \half\|Z-X\|_F^2, \st \Tr(Z) =1, Z\succeq 0 \}\ee
is unitary-invariant, its solution is given by
$\PCal_\CCal(X)=U\diag(\gamma)U^\top$, where $X=U\diag(\sigma)U^\top$ is the eigenvalue decomposition of $X$, and $\gamma$
is the projection of $\sigma$ onto the simplex in the Euclidean space, i.e.,
\be \label{projection-simplex-vec} \gamma := \arg\min \{\half\|\xi-\sigma\|_2^2, \st \sum_{i=1}^p \xi_i = 1, \xi\geq 0\}.\ee

We consider a slightly more general problem
\be \label{projection-simplex-vec-general} \xi^* := \arg\min \{\half\|\xi-\sigma\|_2^2, \st \sum_{i=1}^p \xi_i = r, \xi\geq 0\},\ee
where scalar $r >0$. Note that \eqref{projection-simplex-vec} is a special case of \eqref{projection-simplex-vec-general} with $r=1$.
From the first-order optimality conditions for \eqref{projection-simplex-vec-general}, it is easy to show that the optimal solution of
\eqref{projection-simplex-vec-general} is given by
\[\xi^*_i:=\max\{\sigma_i-\theta,0\}, \forall i=1,\ldots,p,\] where the scalar $\theta$ is the solution of the following piecewise linear equation:
\be\label{projection-simplex-vec-general-sol-linear-equation}\sum_{i=1}^p\max\{\sigma_i-\theta,0\}=r.\ee It is known that the piecewise linear equation
\eqref{projection-simplex-vec-general-sol-linear-equation} can be solved quite efficiently and thus solving \eqref{projection-simplex-vec-general} can be done easily.
In fact, the following procedure (Algorithm \ref{alg:projection-simplex-vector}) gives the optimal solution of \eqref{projection-simplex-vec-general}.
We refer the readers to \cite{Shalev-Shwartz-Singer-JMLR-2006} for the proof of the validity of the algorithm.
\begin{algorithm2e}\caption{Projection onto the simplex in the Euclidean space} \label{alg:projection-simplex-vector}
\linesnumberedhidden \dontprintsemicolon
Input: A vector $\sigma\in\br^p$ and a scalar $r>0$. \;
Sort $\sigma$ into $\hat{\sigma}$ as a non-decreasing order: $\hat\sigma_1\leq\hat\sigma_2\leq\ldots\leq\hat\sigma_p$ \;
Find index $\hat{j}$, the smallest $j$ such that $\hat\sigma_j - \frac{1}{p-j+1}\left(\displaystyle\sum_{i=j}^p \hat\sigma_i - r\right) > 0$ \;
Compute $\theta = \frac{1}{p-\hat{j}+1}\left(\displaystyle\sum_{i=\hat{j}}^p \hat\sigma_i - r\right)$ \;
Output: A vector $\gamma$, s.t. $\gamma_i = \max\{\sigma_i-\theta,0\},i=1,\ldots,p.$ \;
\end{algorithm2e}
It is easy to see that Algorithm \ref{alg:projection-simplex-vector} has an $O(p\log p)$ complexity. Linear time algorithms for solving
\eqref{projection-simplex-vec-general} are studied in
\cite{Brucker-1984,Pardalos-Kovoor-1990,Duchi-projection-L1-ball-ICML-2008}.
Thus, solving \eqref{alg:ADAL-min-sum-indicator-X-ind} corresponds to an eigenvalue decomposition and a projection onto the simplex in the Euclidean space, and
they both can be done efficiently.

Solving \eqref{alg:ADAL-min-L1-sub-Y-ind} (or equivalently \eqref{alg:ADAL-min-L1-sub-Y-proj}) corresponds to a projection onto the $\ell_1$-ball: $\|Y\|_1\leq K$.
It has been shown in \cite{Duchi-projection-L1-ball-ICML-2008,vandenBerg-Friedlander-2008} that projection onto the $\ell_1$-ball can be done easily.
In fact, the solution of
\be \label{projection-L1-ball} \hat\gamma = \arg\min \{\half\|\xi-\hat\sigma\|_2^2, \st \|\xi\|_1\leq r\} \ee
is given by $\hat\gamma_i=\sgn(\hat\sigma_i)\gamma_i, \forall i=1,\ldots,p$, where $\gamma$ is the solution of
\[\min \quad \half\|\gamma-|\hat\sigma|\|_2^2, \quad \st \sum_{i=1}^p \gamma_i=r, \gamma\geq 0,\]
i.e., the projection of $|\hat\sigma|$ (elementwise absolute value of $\hat\sigma$) onto the simplex.
Thus, \eqref{alg:ADAL-min-L1-sub-Y-ind} can be rewritten as
\be \label{alg:ADAL-min-L1-sub-Y-ind-sol} \Xvec(Y^{k+1}) = \arg\min \{\half\|y-\Xvec(X^{k+1}-\mu\Lambda^k)\|_2^2, \st \|y\|_1\leq K\}, \ee
and it corresponds to a projection onto the simplex in the Euclidean space, where $\Xvec(Y)$ denotes the vector form of $Y$ which is obtained by stacking the columns of $Y$ into a long vector.

To summarize, our ADMM for solving \eqref{prob:PCA-dAspremont-sdp-con-L1} and \eqref{prob:PCA-dAspremont-sdp-pen-L1} can be uniformly described
as Algorithm \ref{alg:ADAL-min-L1-final}.
\begin{algorithm2e}\caption{ADMM for solving \eqref{prob:PCA-dAspremont-sdp-con-L1} and \eqref{prob:PCA-dAspremont-sdp-pen-L1}} \label{alg:ADAL-min-L1-final}
\linesnumberedhidden  \dontprintsemicolon
Initialization: $Y^0=0$, $\Lambda^0=0$. \;
\For {k=0,1,\ldots}{
Compute the eigenvalue decomposition: $Y^k+\mu\Lambda^k+\mu\Sigma=U\diag(\sigma)U^\top$ \;
Project $\sigma$ onto the simplex in Euclidean space by Algorithm \ref{alg:projection-simplex-vector}, and denote the solution by $\gamma$ \;
Compute $X^{k+1} = U\diag(\gamma)U^\top$ \;
Perform one of the followings:
\begin{itemize}
\item if \eqref{prob:PCA-dAspremont-sdp-con-L1} is solved, update $Y^{k+1}$ by solving \eqref{alg:ADAL-min-L1-sub-Y-ind-sol} \;
\item if \eqref{prob:PCA-dAspremont-sdp-pen-L1} is solved, update $Y^{k+1}$ by \eqref{alg:ADAL-min-L1-sub-Y-penL1-sol} \;
\end{itemize}
Update $\Lambda^{k+1}$ by $\Lambda^{k+1} = \Lambda^k - (X^{k+1}-Y^{k+1})/\mu$ \;
}
\end{algorithm2e}

\begin{remark}
Although Algorithm \ref{alg:ADAL-min-L1-final} suggests that we need to compute the eigenvalue decomposition of $Y^k+\mu\Lambda^k+\mu\Sigma$ in order to
get the solution to \eqref{alg:ADAL-min-sum-indicator-X-ind}, we actually only need to compute the positive eigenvalues and corresponding eigenvectors
of $Y^k+\mu\Lambda^k+\mu\Sigma$.
\end{remark}

\section{Global Convergence Results}\label{sec:convergence}

In this section, we prove that the sequence $(X^k,Y^k,\Lambda^k)$ produced by the alternating direction method of multipliers \eqref{alg:ADMM-min-sum-indicator}
(i.e., Algorithm \ref{alg:ADAL-min-L1-final}) converges to $(X^*,Y^*,\Lambda^*)$, where $(X^*,Y^*)$ is an optimal solution to \eqref{min-sum-indicator-general-xy}
and $\Lambda^*$ is the corresponding optimal dual variable. Although the proof of global convergence results of
ADMM has been studied extensively in the literature (see e.g., \cite{Eckstein-Bertsekas-1992,He-Liao-Han-Yang-2002}),
we here give a very simple proof of the convergence of our ADMM that utilizes the special structures of the sparse PCA problem.
We only prove the case when $h(Y)=I_\BCal(Y)$ and leave the case when $h(Y)=\rho\|Y\|_1$ to the readers since their proofs are almost identical.

Before we give the main theorem about the global convergence of \eqref{alg:ADMM-min-sum-indicator} (Algorithm \ref{alg:ADAL-min-L1-final}),
we need the following lemma.
\begin{lemma}\label{lem:Fejer-monotone}
Assume that $(X^*,Y^*)$ is an optimal solution of \eqref{min-sum-indicator-general-xy} and $\Lambda^*$ is the corresponding optimal dual
variable associated with the equality constraint $X=Y$. Then the sequence $(X^k,Y^k,\Lambda^k)$ produced by \eqref{alg:ADMM-min-sum-indicator} satisfies
\be \label{lem:conclusion-eq} \|U^{k}-U^*\|_G^2 - \|U^{k+1}-U^*\|_G^2 \geq \|U^k - U^{k+1}\|_G^2, \ee
where $U^* = \begin{pmatrix}\Lambda^* \\ Y^* \end{pmatrix}$, $U^k = \begin{pmatrix}\Lambda^k \\ Y^k \end{pmatrix}$
and $G = \begin{pmatrix} \mu I & 0 \\ 0 & \frac{1}{\mu}I \end{pmatrix}$, and the norm $\|\cdot\|_G^2$ is defined
as $\|U\|_G^2 = \langle U, GU\rangle$ and the corresponding inner product $\langle \cdot,\cdot\rangle_G$ is defined
as $\langle U,V\rangle_G = \langle U,GV\rangle$.
\end{lemma}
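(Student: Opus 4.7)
My plan is to derive \eqref{lem:conclusion-eq} by comparing the subgradient inclusions coming from the $X$- and $Y$-subproblems of \eqref{alg:ADMM-min-sum-indicator} against the KKT conditions at the saddle point $(X^*,Y^*,\Lambda^*)$, invoking monotonicity of $\partial I_\CCal$ and $\partial h$, and then converting the resulting inner-product inequality into a $G$-norm inequality via the polarization identity $2\langle a-b,b-c\rangle = \|a-c\|^2 - \|a-b\|^2 - \|b-c\|^2$.

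First I would write the first-order optimality condition of the $X$-subproblem in \eqref{alg:ADMM-min-sum-indicator} as $\Sigma + \Lambda^k - (X^{k+1}-Y^k)/\mu \in \partial I_\CCal(X^{k+1})$ and, using the multiplier update $\Lambda^{k+1} = \Lambda^k - (X^{k+1}-Y^{k+1})/\mu$, rewrite it in the symmetric form $\Sigma + \Lambda^{k+1} - (Y^{k+1}-Y^k)/\mu \in \partial I_\CCal(X^{k+1})$. The $Y$-subproblem gives $-\Lambda^{k+1}\in\partial h(Y^{k+1})$, while the KKT conditions at the saddle point read $\Sigma + \Lambda^*\in\partial I_\CCal(X^*)$, $-\Lambda^*\in\partial h(Y^*)$, and $X^*=Y^*$. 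Pairing each iterate inclusion with its saddle-point counterpart by monotonicity of the respective subdifferential, adding the two inequalities, using $X^*=Y^*$ together with $X^{k+1}-Y^{k+1}=\mu(\Lambda^k-\Lambda^{k+1})$, and decomposing $X^{k+1}-X^*=\mu(\Lambda^k-\Lambda^{k+1})+(Y^{k+1}-Y^*)$ yields
\begin{equation*}
\mu\langle \Lambda^k - \Lambda^{k+1}, \Lambda^{k+1} - \Lambda^*\rangle - \tfrac{1}{\mu}\langle Y^{k+1}-Y^*, Y^{k+1}-Y^k\rangle \geq \langle \Lambda^k - \Lambda^{k+1}, Y^{k+1}-Y^k\rangle.
\end{equation*}

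Multiplying this by $2$ and applying the polarization identity to each inner product on the left (once with $(a,b,c)=(\Lambda^k,\Lambda^{k+1},\Lambda^*)$, once with $(a,b,c)=(Y^k,Y^{k+1},Y^*)$) rearranges it into
\begin{equation*}
\|U^k-U^*\|_G^2 - \|U^{k+1}-U^*\|_G^2 - \|U^k - U^{k+1}\|_G^2 \geq 2\langle \Lambda^k - \Lambda^{k+1}, Y^{k+1}-Y^k\rangle,
\end{equation*}
so \eqref{lem:conclusion-eq} reduces to showing that the cross term on the right is nonnegative. This is the main, and essentially only nontrivial, obstacle, because the natural monotonicity inequalities that pair an iterate with the optimum do not directly produce the sign of this cross term. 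I would discharge it by invoking monotonicity of $\partial h$ between \emph{consecutive iterates} instead: shifting the index in the $Y$-subproblem gives $-\Lambda^k\in\partial h(Y^k)$, and monotonicity of $\partial h$ at $Y^k$ and $Y^{k+1}$ then yields $\langle Y^{k+1}-Y^k,\Lambda^k-\Lambda^{k+1}\rangle\geq 0$, closing the argument. The same derivation applies verbatim when $h(Y)=\rho\|Y\|_1$ since only convexity of $h$ is used.
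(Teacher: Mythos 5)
Your proof is correct and follows essentially the same route as the paper's: the monotonicity pairings you use are exactly the paper's summed variational inequalities \eqref{lem-OPTcond-X-final} and \eqref{lem-OPTcond-Y-final}, your polarization step reproduces \eqref{lem-proof-inequa-4}, and your key observation — discharging the cross term $\langle \Lambda^k-\Lambda^{k+1},Y^{k+1}-Y^k\rangle$ via monotonicity of $\partial h$ between consecutive iterates $Y^k$ and $Y^{k+1}$ — is precisely the paper's steps \eqref{lem-proof-inequa-5}--\eqref{lem-proof-inequa-6}. (Both arguments implicitly need $-\Lambda^k\in\partial h(Y^k)$ at the first iteration, which holds for $k\geq 1$ and also at $k=0$ under the stated initialization $Y^0=\Lambda^0=0$.)
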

\begin{proof}
Since $(X^*,Y^*,\Lambda^*)$ is optimal to \eqref{min-sum-indicator-general-xy}, it follows from the KKT conditions that the followings hold:
\be \label{lem-KKT-X} 0 \in -\Sigma + \partial I_\CCal(X^*) - \Lambda^*, \ee
\be \label{lem-KKT-Y} 0 \in \partial I_\BCal(Y^*) + \Lambda^*, \ee
and
\be \label{lem-KKT-X=Y} X^* = Y^*\in\CCal \cap\BCal.\ee
By using Property 2, \eqref{lem-KKT-X} and \eqref{lem-KKT-Y} can be respectively reduced to:
\be \label{lem-KKT-X-reduced} \langle \Sigma+\Lambda^*,X-X^*\rangle \leq 0, \forall X\in\CCal, \ee
and
\be \label{lem-KKT-Y-reduced} \langle -\Lambda^*,Y-Y^*\rangle \leq 0, \forall Y\in\BCal. \ee

Note that the optimality conditions for the first subproblem (i.e., the subproblem with respect to $X$) in \eqref{alg:ADMM-min-sum-indicator}
are given by $X^{k+1}\in\CCal$ and
\be \label{lem-OPTcond-X} 0\in-\Sigma+\partial I_\CCal(X^{k+1}) - \Lambda^k + \frac{1}{\mu}(X^{k+1}-Y^k). \ee
By using Property 2 and the updating formula for $\Lambda^k$ in \eqref{alg:ADMM-min-sum-indicator}, i.e.,
\be \label{update-Lambda} \Lambda^{k+1} = \Lambda^k - \frac{1}{\mu}(X^{k+1}-Y^{k+1}),\ee
\eqref{lem-OPTcond-X} can be rewritten as
\be \label{lem-OPTcond-X-reduced} \langle \Sigma+\Lambda^{k+1}+\frac{1}{\mu}(Y^k-Y^{k+1}), X-X^{k+1} \rangle \leq 0, \forall X\in\CCal. \ee
Letting $X=X^{k+1}$ in \eqref{lem-KKT-X-reduced} and $X=X^*$ in \eqref{lem-OPTcond-X-reduced}, and summing the two resulting inequalities, we get,
\be \label{lem-OPTcond-X-final} \langle \Lambda^{k+1}-\Lambda^*+\frac{1}{\mu}(Y^k-Y^{k+1}), X^*-X^{k+1}\rangle \leq 0. \ee

The optimality conditions for the second subproblem (i.e., the subproblem with respect to $Y$) in \eqref{alg:ADMM-min-sum-indicator}
are given by $Y^{k+1}\in\BCal$ and
\be \label{lem-OPTcond-Y} 0\in\partial I_\BCal(Y^{k+1}) + \Lambda^k + \frac{1}{\mu}(Y^{k+1}-X^{k+1}). \ee
By using Property 2 and \eqref{update-Lambda},
\eqref{lem-OPTcond-Y} can be rewritten as
\be \label{lem-OPTcond-Y-reduced} \langle -\Lambda^{k+1}, Y-Y^{k+1}\rangle \leq 0, \forall Y\in\BCal. \ee
Letting $Y=Y^{k+1}$ in \eqref{lem-KKT-Y-reduced} and $Y=Y^*$ in \eqref{lem-OPTcond-Y-reduced}, and summing the two resulting inequalities, we obtain,
\be \label{lem-OPTcond-Y-final} \langle \Lambda^*-\Lambda^{k+1}, Y^*-Y^{k+1} \rangle \leq 0. \ee

Summing \eqref{lem-OPTcond-X-final} and \eqref{lem-OPTcond-Y-final}, and using the facts that $X^*=Y^*$
and $X^{k+1}=\mu(\Lambda^k-\Lambda^{k+1})+Y^{k+1}$, we obtain,
\be \label{lem-proof-inequa-1}
\mu\langle \Lambda^k-\Lambda^{k+1},\Lambda^{k+1}-\Lambda^*\rangle + \frac{1}{\mu}\langle Y^k-Y^{k+1},Y^{k+1}-Y^*\rangle
\geq -\langle Y^k-Y^{k+1},\Lambda^k-\Lambda^{k+1}\rangle. \ee
Rearranging the left hand side of \eqref{lem-proof-inequa-1}
by using $\Lambda^{k+1}-\Lambda^*=(\Lambda^{k+1}-\Lambda^k)+(\Lambda^k-\Lambda^*)$ and $Y^{k+1}-Y^* = (Y^{k+1}-Y^k) + (Y^k-Y^*)$, we get
\be \label{lem-proof-inequa-2}
\mu\langle \Lambda^k-\Lambda^*,\Lambda^k-\Lambda^{k+1}\rangle + \frac{1}{\mu}\langle Y^k-Y^*,Y^k-Y^{k+1}\rangle
\geq \mu\|\Lambda^k-\Lambda^{k+1}\|^2 + \frac{1}{\mu}\|Y^k-Y^{k+1}\|^2 -\langle \Lambda^{k+1}-\Lambda^k, Y^{k+1}-Y^k\rangle. \ee
Using the notation of $U^k$, $U^*$ and $G$, \eqref{lem-proof-inequa-2} can be rewritten as
\be\label{lem-proof-inequa-3}
\langle U^k-U^*,U^k - U^{k+1} \rangle_G \geq \|U^k-U^{k+1}\|_G^2-\langle\Lambda^k-\Lambda^{k+1},Y^k-Y^{k+1}\rangle.\ee
Combining \eqref{lem-proof-inequa-3}
with the identity \[\|U^{k+1}-U^*\|_G^2=\|U^{k+1}-U^k\|_G^2-2\langle U^{k}-U^{k+1}, U^k-U^*\rangle_G+\|U^k-U^*\|_G^2,\]
we get \be \label{lem-proof-inequa-4}
\begin{array}{ll} & \|U^k-U^*\|_G^2 - \|U^{k+1}-U^*\|_G^2 \\ = & 2\langle U^k-U^{k+1},U^k-U^*\rangle-\|U^{k+1}-U^k\|_G^2 \\
\geq & 2\|U^k-U^{k+1}\|_G^2-2\langle\Lambda^k-\Lambda^{k+1},Y^k-Y^{k+1}\rangle-\|U^{k+1}-U^k\|_G^2 \\
= & \|U^k-U^{k+1}\|_G^2-2\langle\Lambda^k-\Lambda^{k+1},Y^k-Y^{k+1}\rangle.\end{array} \ee

Now, using \eqref{lem-OPTcond-Y-reduced} for $k$ instead of $k+1$ and letting $Y=Y^{k+1}$, we get,
\be \label{lem-proof-inequa-5} \langle -\Lambda^k, Y^{k+1}-Y^k\rangle \leq 0.\ee
Letting $Y=Y^{k}$ in \eqref{lem-OPTcond-Y-reduced} and adding it to \eqref{lem-proof-inequa-5} yields,
\be \label{lem-proof-inequa-6} \langle \Lambda^k-\Lambda^{k+1},Y^k-Y^{k+1}\rangle \leq 0.\ee
By substituting \eqref{lem-proof-inequa-6} into \eqref{lem-proof-inequa-4} we get the desired result \eqref{lem:conclusion-eq}.
\end{proof}

We are now ready to give the main convergence result of \eqref{alg:ADMM-min-sum-indicator} (Algorithm \ref{alg:ADAL-min-L1-final}).
\begin{theorem}\label{the:main-convergence}
The sequence $\{(X^k,Y^k,\Lambda^k)\}$ produced by \eqref{alg:ADMM-min-sum-indicator} (Algorithm \ref{alg:ADAL-min-L1-final})
from any starting point converges to an optimal solution to Problem \eqref{min-sum-indicator-general-xy}.
\end{theorem}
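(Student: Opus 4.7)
The plan is to leverage Lemma \ref{lem:Fejer-monotone} in the standard Fejér-monotone style: first use it to extract compactness and a summable residual, then identify cluster points as optimal via the optimality conditions already derived in the lemma's proof, and finally reapply the lemma at such a cluster point to upgrade subsequential convergence to full convergence.

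First I would iterate inequality \eqref{lem:conclusion-eq} to conclude that $\|U^k-U^*\|_G$ is monotonically non-increasing (so $\{Y^k\}$ and $\{\Lambda^k\}$ are bounded) and that
\[
\sum_{k=0}^\infty \|U^k-U^{k+1}\|_G^2 \leq \|U^0-U^*\|_G^2 < \infty.
\]
This forces $\Lambda^k-\Lambda^{k+1}\to 0$ and $Y^k-Y^{k+1}\to 0$. Combining the multiplier update \eqref{update-Lambda} with $\Lambda^k-\Lambda^{k+1}\to 0$ gives $X^{k+1}-Y^{k+1}\to 0$, which (together with $Y^k-Y^{k+1}\to 0$) also implies $X^{k+1}-Y^k\to 0$. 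Since $X^{k+1}\in\CCal$ for every $k$ and $\CCal$ is bounded (its trace is fixed at $1$), $\{X^k\}$ is bounded as well.

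Next I would extract a subsequence $(X^{k_j},Y^{k_j},\Lambda^{k_j})\to(\bar X,\bar Y,\bar\Lambda)$ by Bolzano--Weierstrass. Because $\CCal$ and $\BCal$ are closed, $\bar X\in\CCal$ and $\bar Y\in\BCal$; because $X^{k+1}-Y^{k+1}\to 0$, $\bar X=\bar Y$, which is \eqref{lem-KKT-X=Y}. To identify $(\bar X,\bar Y,\bar\Lambda)$ as a KKT triple I pass to the limit along $k_j$ in the subproblem optimality conditions \eqref{lem-OPTcond-X-reduced} and \eqref{lem-OPTcond-Y-reduced}: the residual terms $\tfrac{1}{\mu}(Y^{k_j}-Y^{k_j+1})$ and the shift between $\Lambda^{k_j}$ and $\Lambda^{k_j+1}$ both vanish in the limit by the previous step, so for every $X\in\CCal$ and $Y\in\BCal$ we obtain
\[
\langle \Sigma+\bar\Lambda,\,X-\bar X\rangle \leq 0, \qquad \langle -\bar\Lambda,\,Y-\bar Y\rangle \leq 0,
\]
which, combined with $\bar X=\bar Y\in\CCal\cap\BCal$, are exactly the KKT conditions \eqref{lem-KKT-X-reduced}--\eqref{lem-KKT-Y-reduced} for \eqref{min-sum-indicator-general-xy}. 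Hence $(\bar X,\bar Y)$ is optimal and $\bar\Lambda$ is an associated optimal multiplier.

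Finally, since Lemma \ref{lem:Fejer-monotone} applies to \emph{any} optimal primal-dual triple, I would use $(\bar X,\bar Y,\bar\Lambda)$ in place of $(X^*,Y^*,\Lambda^*)$. The sequence $\{\|U^k-\bar U\|_G\}$ is then non-increasing, and its subsequence $\{\|U^{k_j}-\bar U\|_G\}$ tends to $0$, so the whole sequence $\|U^k-\bar U\|_G\to 0$; that is, $Y^k\to\bar Y$ and $\Lambda^k\to\bar\Lambda$. Since $X^{k+1}-Y^{k+1}\to 0$, we also get $X^k\to\bar X$, completing the proof. The proof of the $h(Y)=\rho\|Y\|_1$ case is essentially identical, with $\partial I_\BCal$ replaced throughout by $\rho\,\partial\|\cdot\|_1$. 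The only delicate step is passing to the limit in \eqref{lem-OPTcond-X-reduced} and \eqref{lem-OPTcond-Y-reduced}, which is painless here because both constraint sets are closed, the variational inequalities are continuous in the iterates, and the error terms are already known to vanish.
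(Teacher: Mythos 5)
Your proof is correct and takes essentially the same route as the paper: Fej\'er monotonicity from Lemma \ref{lem:Fejer-monotone} yields boundedness and vanishing successive differences, a cluster point is identified as a KKT point by passing to the limit in the subproblem optimality conditions \eqref{lem-OPTcond-X-reduced} and \eqref{lem-OPTcond-Y-reduced}, and optimality of the cluster point is concluded. Your write-up is in fact slightly more complete than the paper's, whose proof stops after showing that every limit point is optimal; your final step---reapplying \eqref{lem:conclusion-eq} with the cluster point $(\bar X,\bar Y,\bar\Lambda)$ in place of $(X^*,Y^*,\Lambda^*)$ to force convergence of the entire sequence---is exactly what is needed to justify the word ``converges'' in the theorem statement, and the paper leaves it implicit.
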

\begin{proof}
From Lemma \ref{lem:Fejer-monotone} we can easily get that
\begin{itemize}
\item (i) $\|U^k-U^{k+1}\|_G \rightarrow 0$;
\item (ii) $\{U^k\}$ lies in a compact region;
\item (iii) $\|U^k-U^*\|_G^2$ is monotonically non-increasing and thus converges.
\end{itemize}
It follows from (i) that $\Lambda^k-\Lambda^{k+1}\rightarrow 0$ and $Y^k-Y^{k+1}\rightarrow 0$.
Then \eqref{update-Lambda} implies that $X^k-X^{k+1}\rightarrow 0$ and $X^k-Y^k\rightarrow 0$.
From (ii) we obtain that, $U^k$ has a subsequence $\{U^{k_j}\}$ that converges to $\hat{U}=(\hat\Lambda,\hat{Y})$,
i.e., $\Lambda^{k_j}\rightarrow\hat\Lambda$ and $Y^{k_j}\rightarrow\hat{Y}$. From $X^k-Y^k\rightarrow 0$ we also get
that $X^{k_j}\rightarrow\hat{X}:=\hat{Y}$. Therefore, $(\hat{X},\hat{Y},\hat{\Lambda})$ is a limit point of $\{(X^k,Y^k,\Lambda^k)\}$.

Note that by using \eqref{update-Lambda}, \eqref{lem-OPTcond-X} can be rewritten as
\be \label{the-convergence-inequa-1} 0 \in -\Sigma+\partial I_\CCal(X^{k+1}) - \Lambda^{k+1} + \frac{1}{\mu}(Y^{k+1}-Y^k),\ee which implies that
\be \label{the-convergence-inequa-2} 0 \in -\Sigma+\partial I_\CCal(\hat{X}) - \hat\Lambda.\ee
Note also that \eqref{lem-OPTcond-Y} implies that
\be \label{the-convergence-inequa-3} 0 \in \partial I_\BCal(\hat{Y}) + \hat\Lambda.\ee
Moreover, it follows from $X^k\in\CCal$ and $Y^k\in\BCal$ that
\be \label{the-convergence-inequa-4} \hat{X}\in\CCal \mbox{ and } \hat{Y}\in\BCal. \ee
\eqref{the-convergence-inequa-2}, \eqref{the-convergence-inequa-3}, \eqref{the-convergence-inequa-4} together with $\hat{X}=\hat{Y}$
imply that $(\hat{X},\hat{Y},\hat{\Lambda})$ satisfies the KKT conditions for \eqref{min-sum-indicator-general-xy} and thus is an optimal
solution to \eqref{min-sum-indicator-general-xy}.
Therefore, we showed that any limit point of $\{(X^k,Y^k,\Lambda^k)\}$ is an optimal solution to \eqref{min-sum-indicator-general-xy}.
\end{proof}

\section{The Deflation Techniques and Other Practical Issues}\label{sec:deflation}

It should be noticed that the solution of Problem \eqref{prob:PCA} only gives the largest eigenvector
(the eigenvector corresponding to the largest eigenvalue) of $\Sigma$. In many applications,
the largest eigenvector is not enough to explain the total variance of the data.
Thus one usually needs to compute several leading eigenvectors to explain more variance of the data.
Hotelling's deflation method \cite{Saad-deflation-1998} is usually used to extract the leading eigenvectors sequentially.
The Hotelling's deflation method extracts the $r$-th leading eigenvector of $\Sigma$ by solving
\[x_r = \arg\max \{x^\top \Sigma_{r-1} x, \st \|x\|_2\leq 1\},\] where $\Sigma_0:=\Sigma$ and
\[\Sigma_r = \Sigma_{r-1} - x_rx_r^\top \Sigma_{r-1}x_rx_r^\top. \]
It is easy to verify that Hotelling's deflation method preserves the positive-semidefiniteness of matrix $\Sigma_r$.
However, as pointed out in \cite{Mackey-NIPS-2008}, it does not preserve the positive-semidefiniteness of $\Sigma_r$
when it comes to the sparse PCA problem \eqref{prob:PCA-card-cons}, because the solution $x_r$ is no longer
an eigenvector of $\Sigma_{r-1}$. Thus, the second leading eigenvector produced by solving the sparse PCA
problem may not explain well the variance of the data. We should point out that the deflation method used in
\cite{daspremont-sparsePCA-direct-formulation-2007} is the Hotelling's deflation method.

Several deflation techniques to overcome this difficulty for sparse PCA were proposed by Mackey in
\cite{Mackey-NIPS-2008}. In our numerical experiments, we chose to use the Schur complement deflation method in \cite{Mackey-NIPS-2008}.
The Schur complement deflation method updates matrix $\Sigma_r$ by
\be \label{deflation-schur} \Sigma_r = \Sigma_{r-1} - \frac{\Sigma_{r-1}x_r x_r^\top \Sigma_{r-1}}{x_r^\top \Sigma_{r-1} x_r}. \ee
The Schur complement deflation method has the following properties as shown in \cite{Mackey-NIPS-2008}.
(i) Schur complement deflation preserves the positive-semidefiniteness of $\Sigma_r$, i.e., $\Sigma_r\succeq 0$.
(ii) Schur complement deflation renders $x_s$ orthogonal to $\Sigma_r$ for $s\leq r$, i.e., $\Sigma_rx_s=0, \forall s\leq r$.

When we want to find the leading $r$ sparse PCs of $\Sigma$, we use ADMM to solve sequentially $r$
problems \eqref{prob:PCA-dAspremont-sdp-con-L1} or \eqref{prob:PCA-dAspremont-sdp-pen-L1}
with $\Sigma$ updated by the Schur complement deflation method \eqref{deflation-schur}. We denote the leading $r$ sparse
PCs obtained by our ADMM as $X_r = (x_1,\ldots,x_r)$. Usually the total variance explained by $X_r$ is given by $\Tr(X_r^\top \Sigma X_r)$.
However, because we do not require $x_1,\ldots,x_r$ to be orthogonal to each other when we sequentially solve the
SDPs \eqref{prob:PCA-dAspremont-sdp-con-L1} or \eqref{prob:PCA-dAspremont-sdp-pen-L1}, these loadings are correlated.
Thus, $\Tr(X_r^\top \Sigma X_r)$ will overestimate the total explained variance by $x_1,\ldots,x_r$. To alleviate the overestimated variance,
Zou \etal \cite{Zou-spca-2006} suggested that the explained total variance should be computed using the following procedure,
which was called {\it adjusted variance}:
\[AdjVar(X_r) := \Tr(R^2),\] where $X_r=QR$ is the QR decomposition of $X_r$. In our numerical experiments, we always report the adjusted variance as the explained
variance.

It is also worth noticing that the problems we solve are convex relaxations of the original problems
\eqref{prob:PCA-card-cons} and \eqref{prob:PCA-card-penalty}. Hence, one needs to postprocess the matrix $X$
obtained by solving \eqref{prob:PCA-dAspremont-sdp-con-L1} or \eqref{prob:PCA-dAspremont-sdp-pen-L1} to get
the solution to \eqref{prob:PCA-card-cons} or \eqref{prob:PCA-card-penalty}. To get the solution to the original
sparse PCA problem \eqref{prob:PCA-card-cons} or \eqref{prob:PCA-card-penalty} from the solution $X$ of the convex SDP problem, we simply perform
a rank-one decomposition to $X$, i.e., $X=xx^\top$. Since $X$ is a sparse matrix, $x$ should be a sparse vector.
This postprocessing technique is also used in \cite{daspremont-sparsePCA-direct-formulation-2007}.

Since the sequences $\{X^k\}$ and $\{Y^k\}$ generated by ADMM converge to the same point eventually, we terminate
ADMM when the difference between $X^k$ and $Y^k$ is sufficiently small. In our numerical experiments, we terminate ADMM when
\[\frac{\|X^k-Y^k\|_F}{\max\{1,\|X^k\|_F,\|Y^k\|_F\}} < 10^{-4}.\]

\section{Numerical Results}\label{sec:numerical}

In this section, we use our ADMM to solve the SDP formulations \eqref{prob:PCA-dAspremont-sdp-con-L1}
and \eqref{prob:PCA-dAspremont-sdp-pen-L1} of sparse PCA on both synthetic and real data sets. We compare the performance of ADMM with two methods for solving
sparse PCA. One method is DSPCA \cite{daspremont-sparsePCA-direct-formulation-2007} for solving \eqref{prob:PCA-dAspremont-sdp-pen-L1} and the other method
is ALSPCA \cite{Lu-Zhang-sparsePCA-MPA-2011} for solving \eqref{prob:PCA-zhaosong}. The Matlab codes of DSPCA and ALSPCA were downloaded from the authors' websites.
Note that the main parts of the DSPCA codes were actually written in C-Mex files.
Our codes were written in Matlab. All experiments were run in MATLAB 7.3.0 on a laptop with 1.66GHZ CPU and 2GB of RAM.

\subsection{A synthetic example}

We tested our ADMM on a synthetic data set suggested by Zou \etal in \cite{Zou-spca-2006}. This synthetic example has three hidden factors:
\[V_1 \sim \mathcal{N}(0,290), V_2\sim \mathcal{N}(0,300), V_3=-0.3V_1+0.925V_2+\epsilon,\]
where $\epsilon\sim \mathcal{N}(0,1)$, and $V_1$, $V_2$ and $\epsilon$ are independent. The 10 observable variables are given by the following procedure:
\[\ba{lll} X_i = V_1 + \epsilon^1_i, & \epsilon^1_i \sim \mathcal{N}(0,1), & i = 1, 2, 3, 4, \\
           X_i = V_2 + \epsilon^2_i, & \epsilon^2_i \sim \mathcal{N}(0,1), & i = 5, 6, 7, 8, \\
           X_i = V_3 + \epsilon^3_i, & \epsilon^3_i \sim \mathcal{N}(0,1), & i = 9, 10, \ea\]
where $\epsilon^j_i$ are independent for $j=1,2,3$ and $i=1,\ldots,10$. The exact covariance matrix of $(X_1,\ldots,X_{10})$ is
used to find the standard PCs by standard PCA and sparse PCs by ADMM. Note that the variances of $V_1$, $V_2$ and $V_3$ indicate
that $V_2$ is slightly more important than $V_1$ and they are both more important than $V_3$. Also, we note that the first two PCs
explain more than $99\%$ of the total variance. Thus, using the first two PCs should be able to explain most of the variance,
and the first sparse PC explains most of the variance of $V_2$ using $(X_5,X_6,X_7,X_8)$ and the second sparse PC explains most
of the variance of $V_1$ using $(X_1,X_2,X_3,X_4)$. Based on these observations, we set $K=4$ in \eqref{prob:PCA-dAspremont-sdp-con-L1}
for computing both the first and the second sparse PCs. When we computed the second sparse PC, we used the Schur complement deflation method
described in Section \ref{sec:deflation} to construct the corresponding sample covariance matrix.
The penalty parameter $\mu$ in ADMM was set to $0.8$. The PCs given by the standard PCA and
sparse PCA using our ADMM for solving \eqref{prob:PCA-dAspremont-sdp-pen-L1} and the explained variances are shown in Table \ref{tab:synthetic-Zou}.
From Table \ref{tab:synthetic-Zou} we see that ADMM gives sparse loadings using $(X_5,X_6,X_7,X_8)$ in the first PC and $(X_1,X_2,X_3,X_4)$ in the second PC.
The first two sparse PCs explain $80.41\%$ of the total variance.

\begin{table}[ht]{
\begin{center}\caption{Loadings and explained variance for the first two PCs}\label{tab:synthetic-Zou}
\begin{tabular}{|c|cc|cc|}\hline
& \multicolumn{2}{|c|}{Standard PCA} & \multicolumn{2}{|c|}{ADMM}  \\\hline

Variables & PC1 & PC2 & PC1 & PC2 \\\hline

$X_1$    &   -0.1157  &  -0.4785  &       0  &  0.5000   \\\hline
$X_2$    &   -0.1157  &  -0.4785  &       0  &  0.5000   \\\hline
$X_3$    &   -0.1157  &  -0.4785  &       0  &  0.5000   \\\hline
$X_4$    &   -0.1157  &  -0.4785  &       0  &  0.5000   \\\hline
$X_5$    &    0.3953  &  -0.1449  &  0.5000  &       0   \\\hline
$X_6$    &    0.3953  &  -0.1449  &  0.5000  &       0   \\\hline
$X_7$    &    0.3953  &  -0.1449  &  0.5000  &       0   \\\hline
$X_8$    &    0.3953  &  -0.1449  &  0.5000  &       0   \\\hline
$X_9$    &    0.4008  &   0.0095  &       0  &       0   \\\hline
$X_{10}$ &    0.4008  &   0.0095  &       0  &       0   \\\hline
Total explained variance & \multicolumn{2}{|c|}{$99.68\%$} & \multicolumn{2}{|c|}{$80.41\%$} \\\hline
\end{tabular}
\end{center}}
\end{table}

\subsection{Pit props data}
The pit props data set has been a standard benchmark for testing sparse PCA algorithms since it was introduced by Jeffers in \cite{Jeffers-1967}.
The pit props data set has 180 observations and 13 measured variables. Thus the covariance matrix $\Sigma$ is a $13\times 13$ matrix.
We used our ADMM to compute the first six sparse PCs sequentially via the Schur complement deflation technique discussed in Section \ref{sec:deflation}.
We set $K = (6,2,2,1,1,1)$ for the six problems \eqref{prob:PCA-dAspremont-sdp-con-L1} as suggested in \cite{daspremont-sparsePCA-direct-formulation-2007}.
We set $\mu=0.8$ in ADMM. The first six sparse PCs obtained by ADMM are shown in Table \ref{tab:pitprops-ADMM}.
We compared the results with ALSPCA for solving \eqref{prob:PCA-zhaosong}.
For ALSPCA, we used the parameters as suggested by the authors, i.e., $r= 6, \rho= 0.70, \Delta_{ij}= 0.50,\forall i\neq j$. The results given by ALSPCA are reported in Table \ref{tab:pitprops-ALSPCA}.
Since there was no clue how to choose the six parameters $\rho$ in the six problems \eqref{prob:PCA-dAspremont-sdp-pen-L1} when DSPCA is used to solve them, we did not compare with DSPCA for solving \eqref{prob:PCA-dAspremont-sdp-pen-L1}. From Tables
\ref{tab:pitprops-ADMM} and \ref{tab:pitprops-ALSPCA} we see that both ADMM and ALSPCA gave a solution with 15 nonzeros in the
first six sparse PCs, and the solution given by ADMM explains slightly more variance than the solution given by ALSPCA.

\begin{table}[ht]{
\begin{center}\caption{First six sparse PCs of the pit props data set given by ADMM}\label{tab:pitprops-ADMM}
\begin{tabular}{|l|llllll|}\hline

Variables & PC1 & PC2 & PC3 & PC4 & PC5 & PC6 \\\hline

Topdiam   & -0.4908 & 0       & 0      &   0       & 0       & 0       \\\hline
Length    & -0.5067 & 0       & 0      &   0       & 0       & 0       \\\hline
Moist     & 0       & -0.7175 & 0      &   0       & 0       & 0       \\\hline
Testsg    & 0       & -0.6965 & 0      &   0       & 0       & 0       \\\hline
Ovensg    & 0       &  0      & 0.9263 &   0       & 0       & 0       \\\hline
Ringtop   & -0.0668 &  0      & 0.3511 &   0       & 0       & 0       \\\hline
Ringbut   & -0.3565 &  0      & 0.1369 &   0       & 0       & 0       \\\hline
Bowmax    & -0.2334 &  0      & 0      &   0       & 0       & 0       \\\hline
Bowdist   & -0.3861 &  0      & 0      &   0       & 0       & 0       \\\hline
Whorls    & -0.4089 &  0      & 0      &   0       & 0       & 0       \\\hline
Clear     & 0       &  0      & 0      &   1.0000  & 0       & 0       \\\hline
Knots     & 0       &  0      & 0      &   0       & 1.0000  & 0       \\\hline
Diaknot   & 0       &  0      & 0      &   0       & 0       & 1.0000  \\\hline

\multicolumn{7}{|c|}{Total sparsity: 15, total explained variance: $74.31\%$} \\\hline
\end{tabular}
\end{center}}
\end{table}

\begin{table}[ht]{
\begin{center}\caption{First six sparse PCs of the pit props data set given by ALSPCA}\label{tab:pitprops-ALSPCA}
\begin{tabular}{|l|llllll|}\hline

Variables & PC1 & PC2 & PC3 & PC4 & PC5 & PC6 \\\hline

Topdiam   & 0.4052   & 0        & 0        & 0        & 0         & 0 \\\hline
Length    & 0.4248   & 0        & 0        & 0        & 0         & 0 \\\hline
Moist     & 0        & -0.7262  & 0        & 0        & 0         & 0 \\\hline
Testsg    & 0.0014   & -0.6874  & 0        & 0        & 0         & 0 \\\hline
Ovensg    & 0        & 0        & -1.0000  & 0        & 0         & 0 \\\hline
Ringtop   & 0.1857   & 0        & 0        & 0        & 0         & 0 \\\hline
Ringbut   & 0.4122   & 0        & 0        & 0        & 0         & 0 \\\hline
Bowmax    & 0.3277   & 0        & 0        & 0        & 0         & 0 \\\hline
Bowdist   & 0.3829   & 0        & 0        & 0        & 0         & 0 \\\hline
Whorls    & 0.4437   & 0.0021   & 0        & 0        & 0         & 0 \\\hline
Clear     & 0        & 0        & 0        & 1.0000   & 0         & 0 \\\hline
Knots     & 0        & 0        & 0        & 0        & 1.0000    & 0 \\\hline
Diaknot   & 0        & 0        & 0        & 0        & 0         & 1.0000 \\\hline

\multicolumn{7}{|c|}{Total sparsity: 15, total explained variance: $73.29\%$} \\\hline
\end{tabular}
\end{center}}
\end{table}

\subsection{Random examples}

We created some random examples to test the speed of
ADMM and compared it with DSPCA \cite{daspremont-sparsePCA-direct-formulation-2007} and ALSPCA \cite{Lu-Zhang-sparsePCA-MPA-2011}.
The sample covariance matrix $\Sigma$ was created by adding some small noise to a sparse rank-one matrix. Specifically, we first created a sparse vector
$\hat{x}\in\br^p$ with $s$ nonzeros randomly chosen from the Gaussian distribution $\mathcal{N}(0,1)$.
We then got the sample covariance matrix $\Sigma = \hat{x}\hat{x}^\top+\sigma vv^\top$,
where $\sigma$ denotes the noise level and $v\in\br^p$ is a random vector with entries uniformly drawn from $[0,1]$. We applied DSPCA, ALSPCA and ADMM to find the
largest sparse PC of $\Sigma$.
We report the comparison results in Tables \ref{tab:random-compare-1} and \ref{tab:random-compare-2} that correspond to noise levels $\sigma=0.01$ and $\sigma=0.1$
respectively. When using DSPCA to solve \eqref{prob:PCA-dAspremont-sdp-pen-L1} and ALSPCA to solve \eqref{prob:PCA-zhaosong},
we set different $\rho$'s to get solutions with different sparsity levels. Specifically, we tested DSPCA for $\rho=0.01,0.1,1$ in Table \ref{tab:random-compare-1}
with $\sigma=0.01$ and $\rho=0.1,1,10$ in Table \ref{tab:random-compare-2} with $\sigma=0.1$; we tested ALSPCA for $\rho=0.01,0.1,1$ in both
Tables \ref{tab:random-compare-1} and \ref{tab:random-compare-2}. We set different $K$'s in \eqref{prob:PCA-dAspremont-sdp-con-L1} to control the sparsity level when
using ADMM to solve it. In both Tables \ref{tab:random-compare-1} and \ref{tab:random-compare-2}, we tested four data sets with dimension $p$ and sparsity $s$
setting as $(p,s)=(100,10),(100,20),(200,10)$ and $(200,20)$. We used the following continuation technique for $\mu$ in ADMM:
$\mu_0=1, \mu_{k}=\max\{2\mu_{k-1}/3,10^{-4}\}.$ $\Delta_{ij}$ were set to 0.1 for all $i\neq j$ in all the tests for ALSPCA as suggested in
\cite{Lu-Zhang-sparsePCA-MPA-2011} for tests on random data sets.

We report the cardinality of the largest sparse PC (Card), the percentage of the explained variance (PEV) and the CPU time in Tables \ref{tab:random-compare-1}
and \ref{tab:random-compare-2}. From Table \ref{tab:random-compare-1} we see that, for $\sigma=0.01$, all three algorithms DSPCA, ADMM and ALSPCA are sensitive to
the parameters that control the sparsity, i.e., $\rho$ and $K$. $\rho=0.01$ always gave the best results for DSPCA and ALSPCA and the explained variance is very close
to the standard PCA. $\rho=0.1$ still provided relatively good solutions for DSPCA and ALSPCA in terms of both sparsity and the explained variance. When $\rho$ was
increased to 1, the solutions given by ALSPCA were too sparse to give a relatively large explained variance; while the solutions given by DSPCA sometimes had more nonzeros than the desired sparsity level (when $(p,s)=(100,10)$), and even when the solutions were of the desired sparsity level, the explained variances were affected a lot (when $(p,s)=(100,20)$ and $(200,20)$). For ADMM, $K=5,4,3$ were tested
for $s=10$ and $K=10,9,8$ were tested for $s=20$. Results shown in Table \ref{tab:random-compare-1} indicate that $K=s/2$ usually produced good results.
When $K$ was changed from $5$ to $4$ and $3$ for $s=10$, the sparsity and explained variance of the solution changed a lot. When $K$ was changed
from $10$ to $9$ and $8$ for $s=20$, the solution was not affected too much in terms of the explained variance. Especially, for $(p,s)=(100,20)$ and $(200,20)$ and $K=8$, ADMM gave solutions with sparsity $13$ that explain $96.10\%$ and $93.66\%$ variance respectively, which are both very close to the results given by the standard PCA. From Table \ref{tab:random-compare-2} we see that,
for $\sigma=0.1$, i.e., when the noise level was large, DSPCA and ALSPCA were more sensitive to the noise compared with their performance when $\sigma=0.01$.
More specifically, in Table
\ref{tab:random-compare-2}, $\rho=0.1$ usually gave a solution with the best explained variance and appropriate sparsity for DSPCA, expect for $(p,s)=(100,10)$, where the solution produced by DSPCA had 21 nonzeros, which was much more than the desired sparsity 10. The solutions given by DSPCA for
$\rho=1$ and $\rho=10$ were not very satisfied. For ALSPCA, when $(p,s)=(100,10)$ and $(100,20)$, $\rho=0.1$ gave good results, while the results given by
$\rho=0.01$ and $\rho=1$ were not very satisfied. However, we observed that the
performance of ADMM when $\sigma=0.1$ was consistent with its performance when $\sigma=0.01$, i.e., its performance was not very sensitive to the noise.

From both Tables \ref{tab:random-compare-1} and
\ref{tab:random-compare-2}, we see that ALSPCA was slightly faster than ADMM, and they were both significantly faster than DSPCA. This is reasonable because ALSPCA solves the non-convex problem \eqref{prob:PCA-zhaosong} and thus eigenvalue decomposition is not required, which costs most of the computational effort in DSPCA and ADMM.

\begin{table}[ht]{
\begin{center}\caption{Comparisons of ADMM, ALSPCA and DSPCA on random examples with $\sigma=0.01$}\label{tab:random-compare-1}
\begin{tabular}{|l|l|l|l|l|l|}\hline
$(p,s)$ & Method & Parameters    & Card & PEV & CPU \\\hline
(100,10)   & PCA    &               &    & 96.16\% & \\
           & DSPCA  & $\rho=0.01$   & 10 & 96.16\% & 12.12 \\
           &        & $\rho=0.1$    & 10 & 95.81\% & 8.29 \\
           &        & $\rho=1$      & 13 & 87.28\% & 6.56 \\
           & ADMM   & $K=5$         & 9  & 95.30\% & 0.26 \\
           &        & $K=4$         & 6  & 91.55\% & 0.28 \\
           &        & $K=3$         & 4  & 79.30\% & 0.19 \\
           & ALSPCA & $\rho=0.01$   & 10 & 96.16\% & 0.13 \\
           &        & $\rho=0.1$    & 9  & 96.02\% & 0.39 \\
           &        & $\rho=1$      & 4  & 89.54\% & 0.13 \\\hline
(100,20)   & PCA    &               &    & 98.07\% &       \\
           & DSPCA  & $\rho=0.01$   & 20 & 98.07\% & 10.93 \\
           &        & $\rho=0.1$    & 20 & 97.71\% & 8.47  \\
           &        & $\rho=1$      & 20 & 85.25\% & 5.75  \\
           & ADMM   & $K=10$        & 20 & 97.98\% & 0.28  \\
           &        & $K=9$         & 18 & 97.40\% & 0.28  \\
           &        & $K=8$         & 13 & 96.10\% & 0.31  \\
           & ALSPCA & $\rho=0.01$   & 20 & 98.07\% & 0.13 \\
           &        & $\rho=0.1$    & 18 & 97.83\% & 0.13 \\
           &        & $\rho=1$      & 8  & 83.87\% & 0.13 \\\hline
(200,10)   & PCA    &               &    & 91.43\% &       \\
           & DSPCA  & $\rho=0.01$   & 10 & 91.42\% & 88.87 \\
           &        & $\rho=0.1$    & 10 & 91.09\% & 61.36 \\
           &        & $\rho=1$      & 8  & 82.91\% & 45.97 \\
           & ADMM   & $K=5$         & 9  & 90.61\% & 1.23 \\
           &        & $K=4$         & 6  & 87.04\% & 1.27 \\
           &        & $K=3$         & 4  & 75.40\% & 0.88 \\
           & ALSPCA & $\rho=0.01$   & 10 & 91.42\% & 0.23 \\
           &        & $\rho=0.1$    & 9  & 91.29\% & 0.23 \\
           &        & $\rho=1$      & 4  & 85.13\% & 0.29 \\\hline
(200,20)   & PCA    &               &    & 95.58\% &      \\
           & DSPCA  & $\rho=0.01$   & 20 & 95.58\% & 79.87 \\
           &        & $\rho=0.1$    & 20 & 95.22\% & 63.12 \\
           &        & $\rho=1$      & 20 & 83.09\% & 42.22 \\
           & ADMM   & $K=10$        & 20 & 95.49\% & 1.57 \\
           &        & $K=9$         & 18 & 94.93\% & 1.67 \\
           &        & $K=8$         & 13 & 93.66\% & 1.71 \\
           & ALSPCA & $\rho=0.01$   & 20 & 95.58\% & 0.23 \\
           &        & $\rho=0.1$    & 18 & 95.34\% & 0.23 \\
           &        & $\rho=1$      & 8  & 81.73\% & 0.23 \\\hline
\end{tabular}
\end{center}}
\end{table}

\begin{table}[ht]{
\begin{center}\caption{Comparisons of ADMM, ALSPCA and DSPCA on random examples with $\sigma=0.1$}\label{tab:random-compare-2}
\begin{tabular}{|l|l|l|l|l|l|}\hline
$(p,s)$ & Method & Parameters    & Card & PEV & CPU \\\hline
(100,10)   & PCA    &               &    & 71.51\% &      \\
           & DSPCA  & $\rho=0.1$    & 21 & 71.23\% & 9.42 \\
           &        & $\rho=1$      & 10 & 64.92\% & 4.40 \\
           &        & $\rho=10$     & 1  & 27.04\% & 4.14 \\
           & ADMM   & $K=5$         & 9  & 70.83\% & 0.24 \\
           &        & $K=4$         & 6  & 68.03\% & 0.25 \\
           &        & $K=3$         & 4  & 58.93\% & 0.17 \\
           & ALSPCA & $\rho=0.01$   & 31 & 71.49\% & 0.13 \\
           &        & $\rho=0.1$    & 9  & 71.39\% & 0.13 \\
           &        & $\rho=1$      & 4  & 66.53\% & 0.13 \\\hline
(100,20)   & PCA    &               &    & 83.59\% &      \\
           & DSPCA  & $\rho=0.1$    & 20 & 83.27\% & 8.58 \\
           &        & $\rho=1$      & 20 & 72.75\% & 5.82 \\
           &        & $\rho=10$     & 56 & 26.80\% & 3.01 \\
           & ADMM   & $K=10$        & 20 & 83.50\% & 0.32 \\
           &        & $K=9$         & 18 & 83.02\% & 0.40 \\
           &        & $K=8$         & 13 & 81.90\% & 0.27 \\
           & ALSPCA & $\rho=0.01$   & 25 & 83.58\% & 0.13 \\
           &        & $\rho=0.1$    & 19 & 83.37\% & 0.13 \\
           &        & $\rho=1$      & 8  & 71.37\% & 0.13 \\\hline
(200,10)   & PCA    &               &    & 51.69\% &      \\
           & DSPCA  & $\rho=0.1$    & 10 & 51.46\% & 19.19 \\
           &        & $\rho=1$      & 88 & 46.95\% & 28.51 \\
           &        & $\rho=10$     & 1  & 19.80\% & 28.04 \\
           & ADMM   & $K=5$         & 9  & 51.15\% & 1.22 \\
           &        & $K=4$         & 6  & 49.13\% & 1.38 \\
           &        & $K=3$         & 4  & 42.61\% & 0.87 \\
           & ALSPCA & $\rho=0.01$   & 10 & 51.61\% & 0.26 \\
           &        & $\rho=0.1$    & 9  & 51.53\% & 0.25 \\
           &        & $\rho=1$      & 4  & 48.01\% & 0.24 \\\hline
(200,20)   & PCA    &               &    & 68.38\% &      \\
           & DSPCA  & $\rho=0.1$    & 20 & 68.12\% & 74.87 \\
           &        & $\rho=1$      & 20 & 59.54\% & 42.63 \\
           &        & $\rho=10$     & 64 & 22.03\% & 34.83 \\
           & ADMM   & $K=9$         & 18 & 67.91\% & 1.46 \\
           &        & $K=8$         & 14 & 67.01\% & 1.74 \\
           &        & $K=7$         & 11 & 65.26\% & 1.76 \\
           & ALSPCA & $\rho=0.01$   & 20 & 68.37\% & 0.24 \\
           &        & $\rho=0.1$    & 18 & 68.20\% & 0.25 \\
           &        & $\rho=1$      & 8  & 58.37\% & 0.24 \\\hline
\end{tabular}
\end{center}}
\end{table}

\subsection{Text data classification}

Sparse PCA can also be used to classify the keywords in text data. This application has been studied by Zhang, d'Aspremont and El Ghaoui in
\cite{Zhang-daspremont-sparsePCA-handbook-2011} and Zhang and El Ghaoui in \cite{Zhang-ElGhaoui-sparsePCA-NIPS-2011}.
In this section, we show that by using our ADMM to solve the sparse PCA problem, we can also classify the keywords from text data very well.
The data set we used is a small version of the ``20-newsgroups'' data\footnote{This data set can be downloaded from http://cs.nyu.edu/$\sim$roweis/data.html.},
which is also used in \cite{Zhang-daspremont-sparsePCA-handbook-2011}.
This data set consists of the binary occurrences of 100 specific words across 16242 postings,
i.e., the data matrix $M$ is of the size $100\times 16242$ and $M_{ij}=1$ if the $i$-th word appears at least once
in the $j$-th posting and $M_{ij}=0$ if the $i$-th word does not appear in the $j$-th posting.
These words can be approximately divided into different groups such as ``computer'', ``religion'' etc. We want to find the words
that contribute as much variance as possible and also discover which words are in the same category. By viewing each posting
as a sample of the 100 variables, we have 16424 samples of the variables, and thus the sample covariance matrix $\Sigma\in\br^{100\times 100}$.
Using standard PCA, it is hard to interpret which words contribute to each of the leading eigenvalues since the loadings are dense.
However, sparse
PCA can explain as much the variance explained by the standard PCs, and meanwhile interpret well which words contribute together
to the corresponding variance. We applied our ADMM to solve \eqref{prob:PCA-dAspremont-sdp-con-L1} to find the first three sparse PCs.
We set $K=5$ in all three problems and the following continuation technique was used for $\mu$:
$\mu_0=100, \mu_{k} = \max\{2\mu_{k-1}/3,10^{-4}\}$. The resulting three sparse PCs have 10, 12 and 17 nonzeros respectively. The total explained variance
by these three sparse PCs is $12.72\%$, while the variance explained by the largest three PCs by the standard PCA is $19.10\%$.

The words corresponding to the first three sparse PCs generated by our ADMM
are listed in Table \ref{tab:20news}.
From Table \ref{tab:20news} we see that the words in the first sparse PC are approximately in the category ``school'',
the words in the second PC are approximately in the category ``religion'', and the words in the third sparse PC are approximately in the category ``computer''.
So our ADMM can classify the keywords into appropriate categories very well.

\begin{table}[ht]{
\begin{center}\caption{Words associated with the first three sparse PCs using ADMM}\label{tab:20news}
\begin{tabular}{|l|l|l|}\hline

1st PC (10 words)        & 2nd PC (12 words)                 & 3rd PC (17 words)                   \\\hline

case         & bible                 & computer     \\

course       & case                  & email     \\

email        & christian             & files     \\

fact         & course                & ftp     \\

help         & evidence              & graphics     \\

number       & fact                  & number     \\

problem      & god                   & phone     \\

question     & government            & problem      \\

system       & human                 & program      \\

university   & jesus                 & research       \\

             & religion              & science       \\

             & world                 & software       \\

             &                         & space         \\

             &                         & state      \\

             &                         & university      \\

             &                         & version       \\

             &                         & windows      \\ \hline

\multicolumn{3}{|c|}{Total sparsity: 39, total explained variance: $12.72\%$} \\\hline
\end{tabular}
\end{center}}
\end{table}

\subsection{Senate voting data}

In this section, we use sparse PCA to analyze the voting records of the 109th US Senate, which was also studied by Zhang, d'Aspremont and El Ghaoui in
\cite{Zhang-daspremont-sparsePCA-handbook-2011}. The votes are recorded as $1$ for ``yes'' and $-1$ for ``no''.
Missing votes are recorded as 0. There are 100 senators (55 Republican, 44 Democratic and 1 independent)
and 542 bills involved in the data set. However, there are many missing votes in the data set. To obtain a meaningful data matrix, we only choose
the bills for which the number of missing votes is at most one.
There are only 66 such bills among the 542 bills. So our data matrix $M$ is a $66\times 100$ matrix with entries $1$, $-1$ and $0$,
and each column of $M$ corresponds to one senator's voting. The
sample covariance matrix $\Sigma=MM^\top$ in our test is a $66\times 66$ matrix.

To see how standard PCA and sparse PCA perform in classifying the voting records, we implemented the following
procedure as suggested in \cite{Zhang-daspremont-sparsePCA-handbook-2011}. We used standard PCA to find the largest two PCs (denoted as $v_1$ and $v_2$)
of $\Sigma$. We then projected each column of $M$ onto the subspace spanned by $v_1$ and $v_2$, i.e., we found $\bar{\alpha}_i$ and $\bar{\beta}_i$
for each column $M_i$ such that
\[(\bar{\alpha}_i,\bar{\beta}_i):= \arg\min_{(\alpha_i,\beta_i)} \|\alpha_i v_1+\beta_i v_2 - M_i\|.\]
We then drew each column $M_i$ as a point $(\bar{\alpha}_i,\bar{\beta}_i)$ in the two-dimensional subspace spanned by $v_1$ and $v_2$. The left figure in Figure
\ref{fig:senate-2D-projection} shows the $100$ points. We see from this figure that senators are separated very well by partisanship. However, it is hard to
interpret which bills are responsible to the explained variance, because all the bills are involved in the PCs. By using sparse PCA, we can interpret the
explained variance by just a few bills. We applied our ADMM to find the first two sparse PCs (denoted as $s_1$ and $s_2$) of $\Sigma$.
We set $K=4$ for both problems and used
the following continuation technique on $\mu$:
$\mu_0=100, \mu_{k} = \max\{2\mu_{k-1}/3,10^{-4}\}.$

The resulting two sparse PCs $s_1$ and $s_2$ produced by our ADMM have 9 and 5 nonzeros respectively. We projected each column of $M$ onto the subspace
spanned by these two sparse PCs. The right figure in Figure \ref{fig:senate-2D-projection} shows the $100$ projections onto the subspace spanned by the sparse PCs $s_1$ and $s_2$.
We see from this
figure that the senators are still separated well by partisanship. Now since only a few bills are involved in the two sparse PCs, we can interpret which bills
are responsible most for the classification.
The bills involved in the first two PCs are listed in Table \ref{tab:senate-two-bills}. From Table \ref{tab:senate-two-bills}
we see that the most controversial issues between Republicans and Democrats are topics such as ``Budget'' and ``Appropriations''. Other controversial issues
involve topics like ``Energy'', ``Abortion'' and ``Health''.

\begin{table}[ht]{
\begin{center}\caption{Bills involved in the first two PCs by ADMM}\label{tab:senate-two-bills}
\begin{tabular}{|l|}\hline
\multicolumn{1}{|c|}{Bills in the first sparse PC} \\\hline
Budget, Spending and Taxes\_Education Funding Amendment\_3804 \\
Budget, Spending and Taxes\_Reinstate Pay-As-You-Go through 2011 Amendment\_3806 \\
Energy Issues\_LIHEAP Funding Amendment\_3808 \\
Abortion Issues\_Unintended Pregnancy Amendment\_3489 \\
Budget, Spending and Taxes\_Budget FY2006 Appropriations Resolution\_3488 \\
Budget, Spending and Taxes\_Budget Reconciliation bill\_3665 \\
Budget, Spending and Taxes\_Budget Reconciliation bill\_3789 \\
Budget, Spending and Taxes\_Education Amendment\_3490 \\
Health Issues\_Medicaid Amendment\_3496 \\\hline
%
\multicolumn{1}{|c|}{Bills in the second sparse PC} \\\hline
Appropriations\_Agriculture, Rural Development, FDA Appropriations Act\_3677 \\
Appropriations\_Emergency Supplemental Appropriations Act, 2005\_3515 \\
Appropriations\_Emergency Supplemental Appropriations Act, 2006\_3845 \\
Appropriations\_Interior Department FY 2006 Appropriations Bill\_3595 \\
Executive Branch\_John Negroponte, Director of National Intelligence\_3505 \\\hline

\end{tabular}
\end{center}}
\end{table}

\begin{figure}
\centering \subfigure{\includegraphics[scale=0.5]{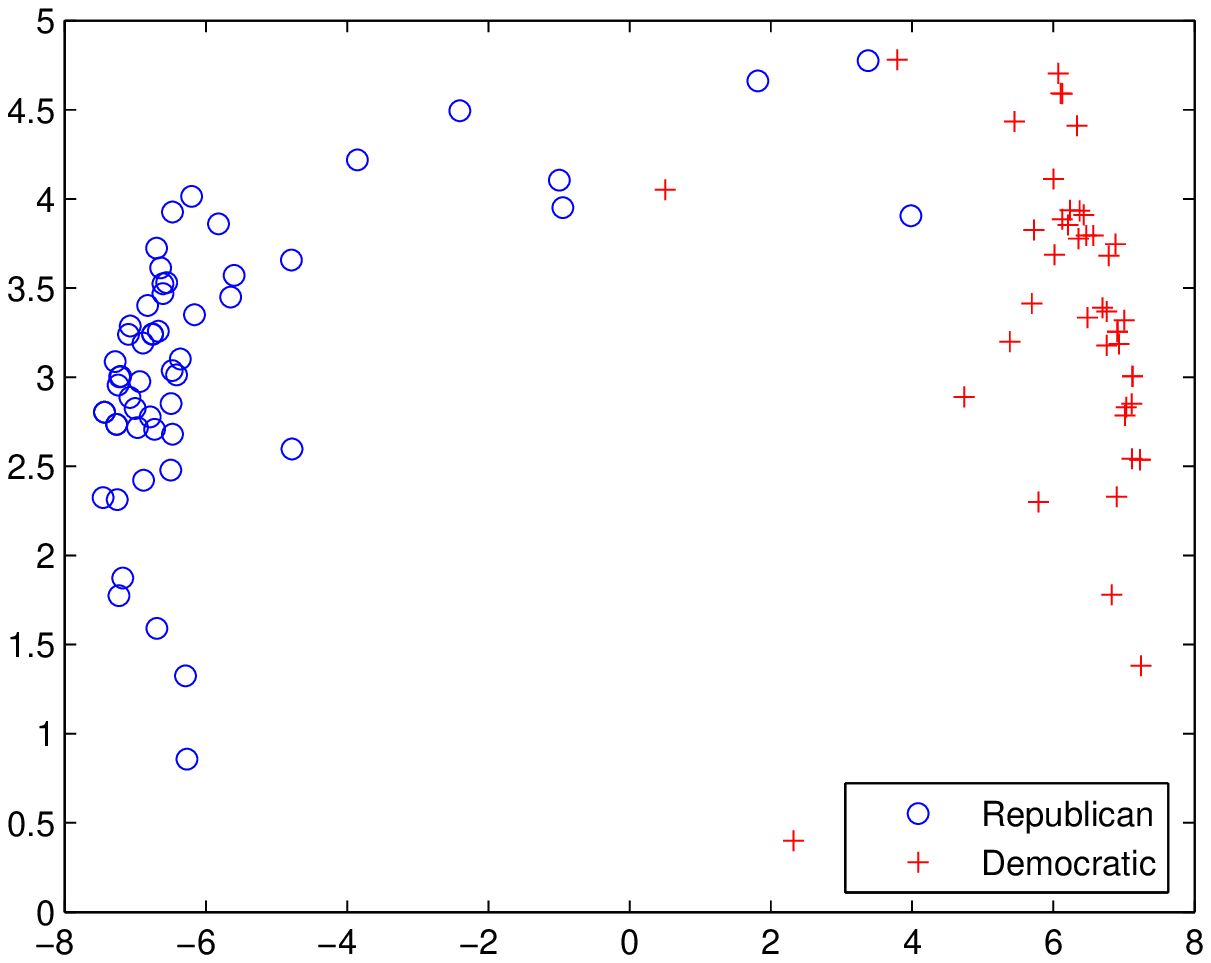}}\hspace{-0.5cm}
\centering \subfigure{\includegraphics[scale=0.5]{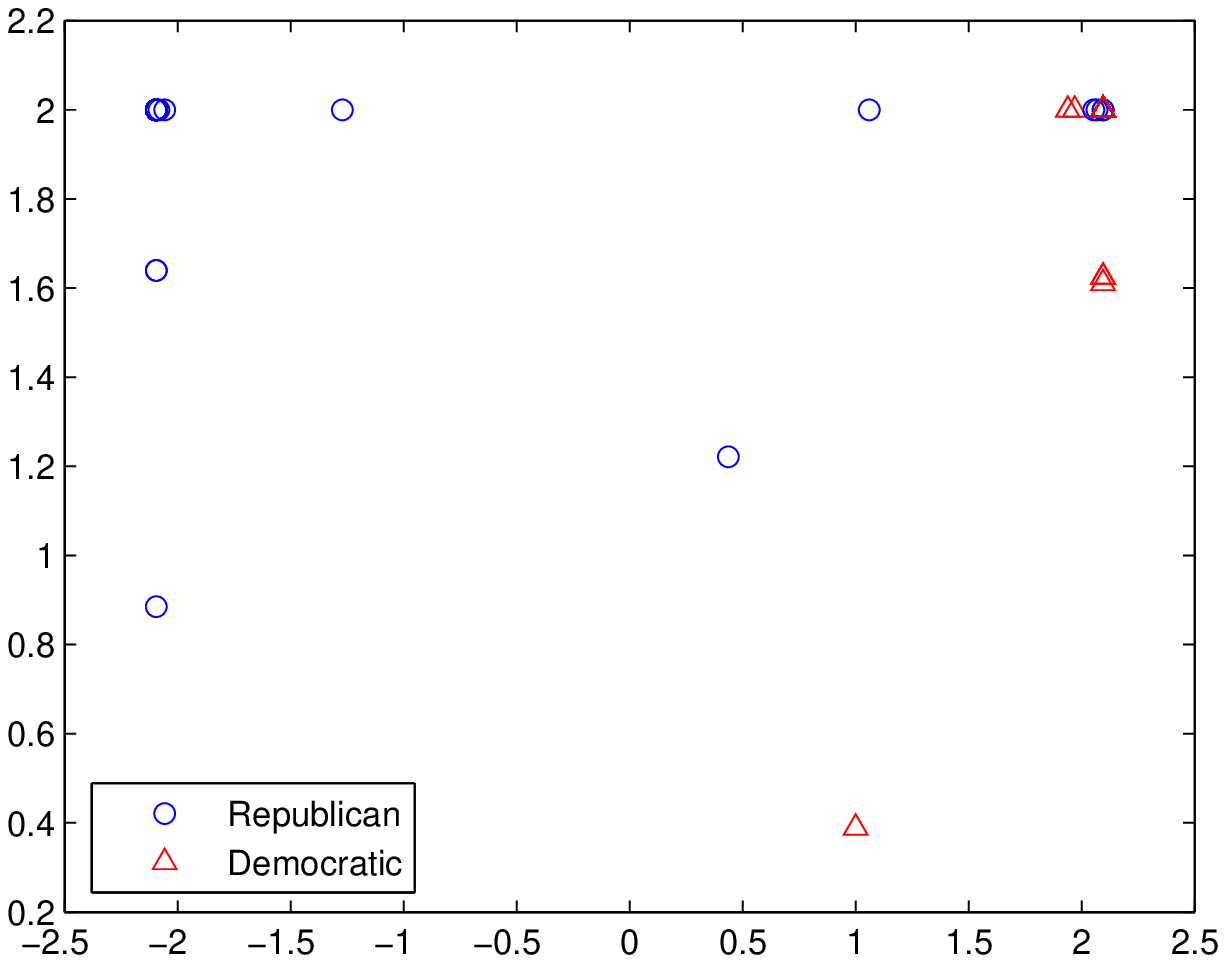}} \hspace{-0.5cm}
\caption{Projection of the senate voting records onto the subspace spanned by the
top 2 principal components: Left: standard PCA; Right: sparse PCA} \label{fig:senate-2D-projection}
\end{figure}

\section{Conclusion}\label{sec:conclusion}

In this paper, we proposed alternating direction method of multipliers to solve an SDP relaxation of the sparse PCA problem. Our method incorporated
a variable-splitting technique to separate the $\ell_1$ norm constraint, which controls the sparsity of the solution, and the positive-semidefiniteness constraint. This method
resulted in two relatively simple subproblems that have closed-form solutions in each iteration. Global convergence results were established for the proposed method.
Numerical results on both synthetic data and real data from classification of text data and senate voting records demonstrated the efficacy of our method.

Compared with Nesterov's first-order method DSPCA for sparse PCA studied in \cite{daspremont-sparsePCA-direct-formulation-2007}, our ADMM method solves the primal problems directly and guarantees sparse solutions. Numerical results also indicate that ADMM is much faster than DSPCA. Compared with methods for solving nonconvex formulations of sparse PCA, the nonsmooth SDP formulation considered in this paper usually requires more computational
effort in each iteration. However, the global convergence of our ADMM for solving the nonsmooth SDP is guaranteed, while methods for solving nonconvex problems
usually have only local convergence.

\section*{Acknowledgement}
The author is grateful to Alexandre d'Aspremont for discussions on using DSPCA. The author thanks Stephen J. Wright and Lingzhou Xue for reading an earlier
version of the manuscript and for helpful discussions. This work was partially supported by the NSF Postdoctoral Fellowship through the
Institute for Mathematics and Its Applications at University of Minnesota.

\bibliographystyle{siam}
\bibliography{C:/Mywork/Optimization/work/reports/bibfiles/All}

\begin{thebibliography}{10}

\bibitem{Alizadeh93interiorpoint}
{\sc Farid Alizadeh}, {\em Interior point methods in semidefinite programming
  with applications to combinatorial optimization}, SIAM Journal on
  Optimization, 5 (1993), pp.~13--51.

\bibitem{Boyd-etal-ADM-survey-2011}
{\sc S.~Boyd, N.~Parikh, E.~Chu, B.~Peleato, and J.~Eckstein}, {\em Distributed
  optimization and statistical learning via the alternating direction method of
  multipliers}, Foundations and Trends in Machine Learning,  (2011).

\bibitem{BV-ConvexBook2004}
{\sc Stephen Boyd and Lieven Vandenberghe}, {\em Convex optimization},
  Cambridge University Press, Cambridge, 2004.

\bibitem{Brucker-1984}
{\sc P.~Brucker}, {\em An $\mathcal{O}(n)$ algorithm for quadratic knapsack
  problems}, Operations Research Letters, 3 (1984), pp.~163--166.

\bibitem{Candes-Romberg-Tao-2006}
{\sc E.~J. Cand\`es, J.~Romberg, and T.~Tao}, {\em Robust uncertainty
  principles: {E}xact signal reconstruction from highly incomplete frequency
  information}, IEEE Transactions on Information Theory, 52 (2006),
  pp.~489--509.

\bibitem{Combettes-Pesquet-DR-2007}
{\sc P.~L. Combettes and Jean-Christophe Pesquet}, {\em A {D}ouglas-{R}achford
  splitting approach to nonsmooth convex variational signal recovery}, IEEE
  Journal of Selected Topics in Signal Processing, 1 (2007), pp.~564--574.

\bibitem{Combettes-Wajs-05}
{\sc P.~L. Combettes and V.~R. Wajs}, {\em Signal recovery by proximal
  forward-backward splitting}, SIAM Journal on Multiscale Modeling and
  Simulation, 4 (2005), pp.~1168--1200.

\bibitem{daspremont-sparsePCA-JMLR-2008}
{\sc A.~d'Aspremont, F.~Bach, and L.~El Ghaoui}, {\em Optimal solutions for
  sparse principal component analysis}, Journal of Machine Learning Research, 9
  (2008), pp.~1269--1294.

\bibitem{daspremont-sparsePCA-direct-formulation-2007}
{\sc A.~d'Aspremont, L.~El Ghaoui, M.~I. Jordan, and G.~R.~G. Lanckriet}, {\em
  A direct formulation for sparse pca using semidefinite programming}, SIAM
  Review, 49 (2007), pp.~434--448.

\bibitem{Donoho-06}
{\sc D.~Donoho}, {\em Compressed sensing}, IEEE Transactions on Information
  Theory, 52 (2006), pp.~1289--1306.

\bibitem{Douglas-Rachford-56}
{\sc J.~Douglas and H.~H. Rachford}, {\em On the numerical solution of the heat
  conduction problem in 2 and 3 space variables}, Transactions of the American
  Mathematical Society, 82 (1956), pp.~421--439.

\bibitem{Duchi-projection-L1-ball-ICML-2008}
{\sc J.~Duchi, S.~Shalev-Shwartz, Y.~Singer, and T.~Chandra}, {\em Efficient
  projections onto the l1-ball for learning in high dimensions}, in ICML, 2008.

\bibitem{Eckstein-thesis-89}
{\sc J.~Eckstein}, {\em Splitting methods for monotone operators with
  applications to parallel optimization}, PhD thesis, Massachusetts Institute
  of Technology, 1989.

\bibitem{Eckstein-Bertsekas-1992}
{\sc J.~Eckstein and D.~P. Bertsekas}, {\em On the {D}ouglas-{R}achford
  splitting method and the proximal point algorithm for maximal monotone
  operators}, Math. Program., 55 (1992), pp.~293--318.

\bibitem{Gabay-83}
{\sc D.~Gabay}, {\em Applications of the method of multipliers to variational
  inequalities}, in Augmented Lagrangian Methods: Applications to the Solution
  of Boundary Value Problems, M.~Fortin and R.~Glowinski, eds., North-Hollan,
  Amsterdam, 1983.

\bibitem{Glowinski-LeTallec-89}
{\sc R.~Glowinski and P.~Le~Tallec}, {\em Augmented Lagrangian and
  Operator-Splitting Methods in Nonlinear Mechanics}, SIAM, Philadelphia,
  Pennsylvania, 1989.

\bibitem{Goldfarb-Ma-Ksplit}
{\sc D.~Goldfarb and S.~Ma}, {\em Fast multiple splitting algorithms for convex
  optimization}, tech. report, Department of IEOR, Columbia University.
  Preprint available at http://arxiv.org/abs/0912.4570, 2009.

\bibitem{Goldfarb-Ma-Scheinberg-2010}
{\sc D.~Goldfarb, S.~Ma, and K.~Scheinberg}, {\em Fast alternating
  linearization methods for minimizing the sum of two convex functions}, tech.
  report, Department of IEOR, Columbia University. Preprint available at
  http://arxiv.org/abs/0912.4571, 2010.

\bibitem{Goldstein-Osher-08}
{\sc T.~Goldstein and S.~Osher}, {\em The split {Bregman} method for
  {L1}-regularized problems}, SIAM J. Imaging Sci., 2 (2009), pp.~323--343.

\bibitem{He-Liao-Han-Yang-2002}
{\sc B.~S. He, L.-Z. Liao, D.~Han, and H.~Yang}, {\em A new inexact alternating
  direction method for monotone variational inequalities}, Math. Program., 92
  (2002), pp.~103--118.

\bibitem{Jeffers-1967}
{\sc J.~Jeffers}, {\em Two case studies in the application of principal
  component analysis}, Appl. Stat., 16 (1967), pp.~225--236.

\bibitem{Journee-Nesterov-sparsePCA-JMLR-2010}
{\sc M.~Journee, Yu. Nesterov, P.~Richtarik, and R.~Sepulchre}, {\em
  Generalized power method for sparse principal component analysis}, Journal of
  Machine Learning Research, 11 (2010), pp.~517--553.

\bibitem{Lions-Mercier-79}
{\sc P.~L. Lions and B.~Mercier}, {\em Splitting algorithms for the sum of two
  nonlinear operators}, SIAM Journal on Numerical Analysis, 16 (1979),
  pp.~964--979.

\bibitem{Lu-Zhang-sparsePCA-MPA-2011}
{\sc Z.~Lu and Y.~Zhang}, {\em An augmented lagrangian approach for sparse
  principal component analysis}, Mathematical Programming,  (2011).

\bibitem{Mackey-NIPS-2008}
{\sc L.~Mackey}, {\em Deflation methods for sparse {PCA}}, in Advances in
  Neural Information Processing Systems (NIPS), 2008.

\bibitem{Malick-Povh-Rendl-Wiegele-2009}
{\sc J.~Malick, J.~Povh, F.~Rendl, and A.~Wiegele}, {\em Regularization methods
  for semidefinite programming}, SIAM Journal on Optimization, 20 (2009),
  pp.~336--356.

\bibitem{Nesterov-1983}
{\sc Y.~E. Nesterov}, {\em A method for unconstrained convex minimization
  problem with the rate of convergence $\mathcal{O}(1/k^2)$}, Dokl. Akad. Nauk
  SSSR, 269 (1983), pp.~543--547.

\bibitem{Nesterov-2005}
\leavevmode\vrule height 2pt depth -1.6pt width 23pt, {\em Smooth minimization
  for non-smooth functions}, Math. Program. Ser. A, 103 (2005), pp.~127--152.

\bibitem{Pardalos-Kovoor-1990}
{\sc P.~M. Pardalos and N.~Kovoor}, {\em An algorithm for a singly constrained
  class of quadratic programs subject to upper and lower bounds}, Mathematical
  Programming, 46 (1990), pp.~321--328.

\bibitem{Peaceman-Rachford-55}
{\sc D.~H. Peaceman and H.~H. Rachford}, {\em The numerical solution of
  parabolic elliptic differential equations}, SIAM Journal on Applied
  Mathematics, 3 (1955), pp.~28--41.

\bibitem{Shalev-Shwartz-Singer-JMLR-2006}
{\sc Shalev-Shwartz S. and Y.~Singer}, {\em Efficient learning of label ranking
  by soft projections onto polyhedra}, Journal of Machine Learning Research, 7
  (2006), pp.~1567--1599.

\bibitem{Saad-deflation-1998}
{\sc Y.~Saad}, {\em Projection and deflation methods for partial pole
  assignment in linear state feedback}, IEEE Trans. Automat. Contr., 33 (1998),
  pp.~290--297.

\bibitem{Scheinberg-Ma-Goldfarb-NIPS-2010}
{\sc K.~Scheinberg, S.~Ma, and D.~Goldfarb}, {\em Sparse inverse covariance
  selection via alternating linearization methods}, in Proceedings of the
  Neural Information Processing Systems (NIPS), 2010.

\bibitem{Todd2001}
{\sc M.~J. Todd}, {\em Semidefinite optimization}, Acta Numer., 10 (2001),
  pp.~515--560.

\bibitem{vandenBerg-Friedlander-2008}
{\sc E.~van~den Berg and M.~P. Friedlander}, {\em Probing the {P}areto frontier
  for basis pursuit solutions}, SIAM J. on Scientific Computing, 31 (2008),
  pp.~890--912.

\bibitem{Wang-Yang-Yin-Zhang-2008}
{\sc Y.~Wang, J.~Yang, W.~Yin, and Y.~Zhang}, {\em A new alternating
  minimization algorithm for total variation image reconstruction}, SIAM
  Journal on Imaging Sciences, 1 (2008), pp.~248--272.

\bibitem{Wen-Goldfarb-Yin-2009}
{\sc Z.~Wen, D.~Goldfarb, and W.~Yin}, {\em Alternating direction augmented
  {L}agrangian methods for semidefinite programming}, Mathematical Programming
  Computation, 2 (2010), pp.~203--230.

\bibitem{Yang-Zhang-2009}
{\sc J.~Yang and Y.~Zhang}, {\em Alternating direction algorithms for $\ell_1$
  problems in compressive sensing}, SIAM Journal on Scientific Computing, 33
  (2011), pp.~250--278.

\bibitem{Yuan-2009}
{\sc X.~Yuan}, {\em Alternating direction methods for sparse covariance
  selection},  (2009).
\newblock Preprint available at
  http://www.optimization-online.org/DB\_HTML/2009/09/2390.html.

\bibitem{Zhang-daspremont-sparsePCA-handbook-2011}
{\sc Y.~Zhang, A.~d'Aspremont, and L.~El Ghaoui}, {\em Sparse {PCA}: Convex
  relaxations, algorithms and applications}, Handbook on Semidefinite, Cone and
  Polynomial Optimization, M. Anjos and J.B. Lasserre, editors,  (2011).

\bibitem{Zhang-ElGhaoui-sparsePCA-NIPS-2011}
{\sc Y.~Zhang and L.~El Ghaoui}, {\em Large-scale sparse principal component
  analysis with application to text data}, in NIPS, 2011.

\bibitem{Zou-spca-2006}
{\sc H.~Zou, T.~Hastie, and R.~Tibshirani}, {\em Sparse principle component
  analysis}, J. Comput. Graph. Stat., 15 (2006), pp.~265--286.

\end{thebibliography}

\end{document}